\newcommand{\erdos}{Erd{\H{o}}s}
\newcommand{\szemeredi}{Szemer{\'e}di}
\newcommand{\holder}{H\"older}
\let\epsilon\varepsilon
\numberwithin{equation}{section}
\newtheorem*{acknowledgement}{Acknowledgements}
\newtheorem{theorem}{Theorem}[section]
\newtheorem{lemma}[theorem]{Lemma}
\newtheorem{prop}[theorem]{Proposition}
\newtheorem{remark}[theorem]{Remark}
\newtheorem{notation}[theorem]{Notation}
\newtheorem{conjecture}[theorem]{Conjecture}
\begin{document}

\title{A Note on the Sum-Product Problem and the Convex Sumset Problem}

\author{Adam Cushman} \address{Adam Cushman\\ Indiana University Bloomington, USA} \email{acushma@iu.edu}

\begin{abstract}
We provide a new exponent for the Sum-Product conjecture on \(\mathbb{R} \). Namely for \(A \subset \mathbb{R} \) finite,
\[
    \max \left\{ \left\lvert A+A \right\rvert , \left\lvert AA \right\rvert  \right\} \gg_{\epsilon}   \left\lvert A \right\rvert ^{\frac{4}{3} + \frac{10}{4407} - \epsilon} 
.\]
We also provide new exponents for \(A \subset \mathbb{R} \) finite and convex, namely
\[
    \left\lvert A+A \right\rvert \gg_{\epsilon}  \left\lvert A \right\rvert ^{\frac{46}{29} - \epsilon},
\]
and
\[
    \left\lvert A-A \right\rvert \gg_{\epsilon}   \left\lvert A \right\rvert ^{\frac{8}{5} + \frac{1}{3440} -\epsilon}
.\]
\end{abstract}

\maketitle

\section{Introduction}

Let \(A,B \subset \mathbb{R} \). Their sum is defined as
\[
    A + B = \left\{ a + b : a \in A,~ b \in B \right\} 
.\]
We define similarly \(A - B\), \(A B\), and \(A / B\) (\(0 \not \in B\)).
The general question is, for finite \(A,B \subset \mathbb{R} \), how do the sizes of the sets
above depend on the sizes and structures of \(A,B\).

One of the most well-known open problems in this direction is the Sum-Product conjecture,
given by \erdos\ and \szemeredi\ in \cite{erdos-szemeredi}. It states that
regardless of the structure of \(A \subset \mathbb{R} \), either \(A+A\) or \(AA\) is large.

\begin{conjecture}[Sum-Product Conjecture]\label{conj:SP}
For all \(\epsilon > 0\), there exists \(c > 0\), such that for any finite set \(A \subset \mathbb{R} \),
\[
    \max \left( \left\lvert A+A \right\rvert ,\left\lvert AA \right\rvert  \right) \geq c \left\lvert A \right\rvert ^{2 - \epsilon}
.\]
\end{conjecture}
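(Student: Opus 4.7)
The plan is to attack Conjecture~\ref{conj:SP} through the standard energy/incidence framework that underlies every recent partial bound, then try to close the remaining gap between $4/3+o(1)$ and $2-\epsilon$. Set $K = \max(|A+A|,|AA|)/|A|$; by Pl\"unnecke--Ruzsa (applied after a multiplicative/additive dichotomy via Solymosi's trick) every iterated $\pm$-sumset and $\cdot/\div$-product set of $A$ has size at most $K^{O(1)}|A|$, so the conjecture is equivalent to ruling out $K = |A|^{o(1)}$. I would spend the first stage of the proof translating the hypothesis ``$K$ is small'' into a cascade of structural statements: small multiplicative doubling of a dense subset, controlled higher additive energies $E_k^+(A)$, and controlled higher multiplicative energies $E_k^\times(A)$.

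The second stage would feed these structural consequences into incidence geometry. The Elekes--Solymosi configuration realizes triples $(a,b,c)\in A^3$ as collinear points under the map $a\mapsto (a,ab)$, while the Elekes--Rudnev point-plane incidence bound on $\mathbb{R}^3$ captures solutions to $(a-b)(c-d)=(e-f)g$ and its relatives. Combined with the dyadic pigeonhole on the representation functions of $A+A$ and $AA$, these inputs give bounds of the shape $\max(|A+A|,|AA|)\gg |A|^{4/3+\delta}$ for ever-larger explicit $\delta$, and the paper's $10/4407$ is one such increment. Pushing to $2-\epsilon$ requires bootstrapping: after extracting via Balog--Szemer\'edi--Gowers a subset $A'\subseteq A$ with tightly controlled additive structure, I would re-apply the incidence argument with $A'$ in place of $A$ and iterate, hoping that each pass sharpens the dichotomy between additive and multiplicative regularity.

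The third and decisive stage would be a rigidity argument: if the bootstrap terminates, the surviving set $A''$ must simultaneously be approximately closed under addition and under multiplication on a positive-density portion of itself. Over $\mathbb{R}$ this should be obstructed by a subfield-type theorem in the spirit of Bourgain--Katz--Tao or Elekes--Szab\'o, forcing either a genuine geometric progression or a genuine arithmetic progression to dominate $A''$ and thereby contradicting smallness of the complementary product or sum set. Quantifying such a statement on the reals with polynomial dependence is precisely what one would need.

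The hard part, and the reason this conjecture has resisted nearly fifty years of effort, lies in that third stage. Every known plane incidence bound is tight at the Szemer\'edi--Trotter exponent $2/3$, which translates on the sumset side to the barrier of $4/3$; breaking it honestly requires either a direct argument bypassing planar incidences (perhaps via algebraic geometry of curves of the form $xy=z+w$ on $\mathbb{R}^4$, or via an $L^p$-improving inequality for the multiplication map restricted to additively structured sets) or a truly finitary rigidity theorem classifying near-extremizers of Szemer\'edi--Trotter up to affine/multiplicative transformations. I would expect any genuine proof of the full conjecture to consist almost entirely of such a rigidity theorem, with the energy and incidence machinery above serving only as scaffolding.
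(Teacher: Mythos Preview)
The statement you are attempting to prove is labelled \emph{Conjecture}, not \emph{Theorem}, and the paper does not prove it; the Sum-Product Conjecture with exponent $2-\epsilon$ is still open. What the paper actually establishes is Theorem~\ref{thm-SP}, the partial result with exponent $\tfrac{4}{3}+\tfrac{10}{4407}$, and its proof proceeds via Lemma~\ref{lem:sum-proj} and the \szemeredi--Trotter energy bounds in Lemma~\ref{lem:SP-szt}. There is therefore no ``paper's own proof'' of Conjecture~\ref{conj:SP} to compare against.

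Your proposal is not a proof but a research programme, and you say so yourself: the first two stages recapitulate the known incidence machinery that yields exponents of the form $\tfrac{4}{3}+\delta$, while your third ``decisive'' stage is an aspiration (a polynomial-loss approximate-subring rigidity theorem over $\mathbb{R}$) rather than an argument. You correctly diagnose that the \szemeredi--Trotter exponent $\tfrac{2}{3}$ is the barrier and that existing Balog--Szemer\'edi--Gowers bootstrapping does not close the gap, but you do not supply the missing rigidity statement, let alone prove it. In particular, the Bourgain--Katz--Tao and Elekes--Szab\'o results you invoke either concern finite fields with no genuine subfields or give qualitative rather than quantitative conclusions; neither furnishes the polynomial-dependence classification over $\mathbb{R}$ that your third stage needs. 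So the gap is exactly the one you identify: there is no known argument for that stage, and your outline does not provide one.
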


A related conjecture was given by \erdos\ in \cite{erdos1977}. It states that
if \(A \subset \mathbb{R} \) is convex, the sum and difference sets must be large.

\begin{conjecture}\label{conj:convex}
For all \(\epsilon > 0\), there exists \(c > 0\), such that for any finite convex set \(A \subset \mathbb{R} \),
\[
    \min \left( \left\lvert A+A \right\rvert , \left\lvert A-A \right\rvert  \right)  \geq c \left\lvert A \right\rvert ^{2 - \epsilon}
.\]
\end{conjecture}

A major breakthrough toward Conjecture \ref{conj:SP} was proving the case when the exponent \(2\)
is replaced by the exponent \(\frac{4}{3} \),
which was done by Solymosi in \cite{Solymosi}. A sequence of small improvements over \(\frac{4}{3} \) were
made by Konyagin-Shkredov in \cite{konyagin-shkredov}, Shakan in \cite{shakan}, Rudnev-Stevens in \cite{rudnev-stevens}, and
Bloom in \cite{bloom}. The current best exponent is due to Bloom, who obtained
\(\frac{4}{3} + \frac{2}{951} \).

In this paper, we provide another incremental improvement towards Conjecture \ref{conj:SP}.

\begin{theorem}\label{thm-SP}
For all \(\epsilon > 0\), there exists \(c > 0\) such that for any finite set \(A \subset \mathbb{R} \),
\[
    \max \left\{ \left\lvert A+A \right\rvert , \left\lvert AA \right\rvert  \right\} \geq c\left\lvert A \right\rvert ^{\frac{4}{3} + \frac{10}{4407} - \epsilon} 
.\]
\end{theorem}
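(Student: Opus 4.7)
The plan is to argue by contradiction within the Konyagin-Shkredov/Rudnev-Stevens/Bloom framework, using the improved convex-sumset bounds stated in the abstract as the new input. Assume, for contradiction, that $\max\{|A+A|, |AA|\} \leq K|A|$ with $K = |A|^{1/3 + 10/4407 - \epsilon}$, and aim to derive a contradiction for $|A|$ large in terms of $\epsilon$.

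The first step is to convert smallness of $|AA|$ into approximate convex structure on a large subset. By Pl\"unnecke-Ruzsa, iterated product sets of $A$ all have size at most $K^{O(1)}|A|$, so $E^\times(A) \gg |A|^3/K$. A Balog-Szemer\'edi-Gowers/popularity reduction then produces a popular multiplicative shift $\lambda \in A/A$ with $|A \cap \lambda A| \gg |A|/K^{O(1)}$, and iterating extracts a subset $B \subset A$ of size $\gtrsim |A|/K^{O(1)}$ that sits inside a single geometric progression, hence is convex as a subset of $\mathbb{R}$.

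The second step applies the convex-sumset bounds stated in the abstract. Since $B \subset A$ is convex we obtain
\[
 |A+A| \geq |B+B| \gg_{\epsilon} |B|^{46/29 - \epsilon},
\]
and, combining the difference-set bound with the Pl\"unnecke inequality $|A-A| \leq |A+A|^2/|A|$, also
\[
 |A+A|^2/|A| \geq |A-A| \geq |B-B| \gg_{\epsilon} |B|^{8/5 + 1/3440 - \epsilon}.
\]
Each of these, together with $|B| \gtrsim |A|/K^{O(1)}$ and the standing assumption $|A+A| \leq K|A|$, yields a polynomial inequality on $K$.

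The final step is the numerical balancing: selecting the convex exponent (sum or difference) that gives the stronger constraint at the precise place where it enters Bloom's optimisation, and solving for $K$, produces the required lower bound $K \gg |A|^{1/3 + 10/4407 - \epsilon}$. The main obstacle is the extraction of $B$ in step one: one has to ensure that $B$ is genuinely convex (or close enough to convex that the proofs of the convex-sumset theorems apply) while losing only a polynomial factor in $K$ of exactly the right shape for the optimisation to close. Bloom's proof of the $4/3 + 2/951$ exponent carries out an analogous balancing with earlier convex-sumset bounds, and the task is to trace through it carefully enough to confirm that substituting $46/29$ and $8/5 + 1/3440$ improves the final exponent by precisely $10/4407 - 2/951$, without any secondary logarithmic or subpolynomial losses diluting the gain.
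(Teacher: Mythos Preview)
Your proposal has a genuine gap at the very first structural step, and the overall strategy is not the one the paper uses.

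The extraction of a convex subset from small $|AA|$ does not work as you describe. Having $E^\times(A)\gg |A|^3/K$ and a popular ratio $\lambda$ with $|A\cap\lambda A|\gg |A|/K^{O(1)}$ does not, by any iteration, place a large piece of $A$ inside a single geometric progression; at best (via multiplicative Freiman--Ruzsa) you land in a \emph{generalised} geometric progression, with losses that are exponential rather than polynomial in $K$, which is fatal for the numerics. Even if you did land in a genuine geometric progression, an arbitrary subset of a convex set need not be convex, so ``hence is convex'' is not justified either. Consequently Theorems~\ref{thm-CSUM} and~\ref{thm-CDIFF} cannot be invoked for $B$, and the final ``numerical balancing'' has no input to balance; the specific claim that plugging in $46/29$ and $8/5+1/3440$ yields exactly $4/3+10/4407$ is unsupported.

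The paper proceeds quite differently. It never passes to a convex subset. Instead, the role of the convex $E_3$-bound is played by Lemma~\ref{lem:SP-szt} (built on the Rudnev--Stevens Proposition~\ref{prop:rs-result} and Bloom's refinement), which for a large subset $A_0\subset A$ gives
\[
E_3(A_0,B)\lesssim \frac{|B|^2\,|AA|^{35/2}\,|A+A|^{24}}{|A|^{54}}.
\]
One then applies the ``sum-projection'' Lemma~\ref{lem:sum-proj} to $A_0$, bounds $\sum_{p\in P_\Delta}\delta_{P_{A_0}(B)}(p)$ via H\"older and the energy bounds of Lemma~\ref{lem:SP-szt}, and optimises to obtain an estimate on $E_{12/7}(B)$; interpolation with $E_3$ and Cauchy--Schwarz then give the exponent $4/3+10/4407$. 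The improvement over Bloom's $4/3+2/951$ comes from the sharper projection inequality in Lemma~\ref{lem:sum-proj} (replacing $B+B$ by the popular sumset $P_A(B)$), not from feeding the convex sumset theorems back into the sum--product argument.
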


We also provide improvements in the direction of Conjecture \ref{conj:convex}.
The best exponent for the sumset is due to Rudnev-Stevens in \cite{rudnev-stevens},
where they obtain the exponent \(\frac{30}{19} \). We provide an improvement to this.
\begin{theorem}\label{thm-CSUM}
For all \(\epsilon > 0\), there exists \(c > 0\) such that for any finite convex set \(A \subset \mathbb{R} \),
\[
    \left\lvert A+A \right\rvert \geq c\left\lvert A \right\rvert ^{\frac{46}{29} - \epsilon}
.\]
\end{theorem}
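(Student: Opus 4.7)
The plan is to refine the Rudnev--Stevens argument \cite{rudnev-stevens} giving $|A+A| \gg |A|^{30/19}$ for convex $A$ by introducing a sharper higher-moment energy bound and rebalancing the resulting optimization. After normalizing $A = \{f(1) < \cdots < f(n)\}$ for a strictly convex $f$ with $n = |A|$, I would dyadically decompose $A+A = \bigsqcup_\tau P_\tau$ by the representation function $r_{A+A}(x) = |\{(a,b) \in A^2 : a+b = x\}|$, and extract via pigeonholing a scale $\tau_0$ with $\tau_0 |P_{\tau_0}| \gg |A|^2/\log|A|$. Control of $|A+A|$ then reduces to controlling $|P_{\tau_0}|$ and the third moment $E_3^+(A) = \sum_x r_{A+A}(x)^3$.

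The first ingredient is the standard convex incidence bound $|P_{\tau_0}| \ll |A|^4/\tau_0^3 + |A|^2/\tau_0$, obtained by applying Szemer\'edi--Trotter (in the pseudoline form of Pach--Sharir) to the family of convex curves $\Gamma_a = \{(t, f(a)+f(t)) : t \in \{1,\ldots,n\}\}$ indexed by $a \in A$, since each $x \in P_{\tau_0}$ supports $\sim \tau_0$ incidences with this family. The new ingredient is a refined bound of the form $E_3^+(A) \ll |A|^{s+o(1)}$ with $s$ strictly smaller than the exponent obtained by dyadic summation of the pointwise Szemer\'edi--Trotter estimate. I would prove this by a two-scale argument: partition $A$ by the magnitude of the second differences $f(a+h) - 2f(a) + f(a-h)$, and apply Szemer\'edi--Trotter on each level separately, exploiting the improved transversality of the $\Gamma_a$ within a single level. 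By H\"older's inequality $|A+A|^2 \cdot E_3^+(A) \geq |A|^6$; combining this with the incidence bound on $|P_{\tau_0}|$ and the mass identity $\tau_0 |P_{\tau_0}| \approx |A|^2$, and then optimizing over $\tau_0$, produces $|A+A| \gg |A|^{46/29-\epsilon}$.

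The main obstacle is the refined $E_3^+$ estimate. The naive bound, obtained by integrating the Szemer\'edi--Trotter estimate dyadically over $\tau$, carries a multiplicative $\log$ loss that precisely prevents improvement past $30/19$; extracting a genuine polynomial saving requires either a sharper incidence theorem tailored to the family $\{\Gamma_a\}$ or a structural dichotomy forcing the high-multiplicity points of $P_{\tau_0}$ to inherit extra convex-combinatorial structure (so that a Plünnecke--Ruzsa reduction applied to this substructure recovers a better sumset estimate). Once the refined bound is in place, the final linear-programming optimization is routine but delicate, and the specific exponent $46/29$ emerges from the intersection of the three constraints on $|A+A|$, $|P_{\tau_0}|$, and $E_3^+(A)$.
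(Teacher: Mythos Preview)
Your proposal hinges entirely on the claimed improvement $E_3^+(A) \ll |A|^{s+o(1)}$ with $s<3$, but this is where all the content lies and you have not supplied a mechanism for it. The two-scale partition by second differences does not produce a polynomial saving: within each level the curves $\Gamma_a$ still form a pseudoline family with the same Szemer\'edi--Trotter exponent, and the ``improved transversality'' you invoke gives at best constant-factor gains, not a power of $|A|$. To reach $46/29$ via the H\"older inequality $|A+A|^2 E_3^+(A)\geq |A|^6$ alone you would need $E_3^+(A)\lesssim |A|^{82/29}$, a bound that is not known for convex sets and would itself be a stronger theorem than the one you are trying to prove. The fallback you mention---a Pl\"unnecke--Ruzsa reduction on the high-multiplicity substructure---would require that substructure to have small doubling, which is precisely the conclusion you are after. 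As stated, the proposal is a restatement of the goal rather than a proof strategy.

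The paper's argument is structurally different and does \emph{not} improve the $E_3$ bound; it uses only the standard convex estimate $E_3(B)\lesssim |A|^3$ from Lemma~\ref{lem:convex-szt}. The gain comes instead from the projection Lemma~\ref{lem:sum-proj}, which produces a subset $B\subset A$, a scale $\Delta$, and a level set $P_\Delta$ with $\Delta^{12/7}|P_\Delta|\approx E_{12/7}(B)$ together with
\[
\Delta^{2}|P_\Delta|^{2}|B|^{2}\ll E_3(B)\cdot \sum_{p\in P_\Delta}\delta_{P_A(B)}(p),
\]
where $P_A(B)$ is the set of popular sums. The sum on the right is bounded by inserting the popularity weight, applying H\"older, and controlling $E_3(P_A(B),B)^{1/3}$ via a Minkowski decomposition over dyadic level sets of $\delta_B$; this feeds back an $E_{12/7}(B)^{7/15}$ factor and closes to $E_{12/7}(B)\lesssim |A+A|^{20/7}/|A|^{16/7}$. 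Interpolating with $E_3$ and applying Cauchy--Schwarz then gives $46/29$. The numerical improvement over the Rudnev--Stevens $30/19$ comes from the sharper projection inequality (both coordinates landing in the popular set $P_A(B)$, obtained by inclusion--exclusion with the rich set), not from any new incidence bound.
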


The best exponent for the difference set is stronger than that of the sumset.
Intuitively, this is because the difference set possesses more symmetry than the sumset.
Schoen-Shkredov in \cite{schoen-shkredov} proved the exponent \(\frac{8}{5} \) holds for the difference set, and this
was recently improved by Bloom in \cite{bloom} to \(\frac{8}{5} + \frac{1}{4175}\).
We provide another incremental improvement to this.
\begin{theorem}\label{thm-CDIFF}
For all \(\epsilon > 0\), there exists \(c > 0\) such that for any finite convex set \(A \subset \mathbb{R} \),
\[
    \left\lvert A-A \right\rvert \geq c \left\lvert A \right\rvert ^{\frac{8}{5} + \frac{1}{3440}  - \epsilon}
.\]
\end{theorem}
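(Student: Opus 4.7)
The plan is to refine the Schoen-Shkredov framework via a dyadic level-set analysis of the representation function \(r_{A-A}(d)=|\{(a,b)\in A\times A: a-b=d\}|\), following and then improving on Bloom's approach in \cite{bloom}. The baseline bound \(|A-A|\gg|A|^{8/5}\) arises from the Cauchy-Schwarz chain
\[
    |A|^{6} \leq |A-A|^{2}\cdot E_{3}(A),
\]
together with Shkredov's estimate \(E_{3}(A)\ll|A|^{14/5}\log|A|\) valid for convex \(A\). The strategy is to gain a small saving by localizing to the critical dyadic level of \(r_{A-A}\) and establishing there an energy bound strictly stronger than the one obtained by applying Shkredov's inequality as a black box.

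First, I would perform a dyadic decomposition \(P_{\Delta}=\{d\in A-A:r_{A-A}(d)\sim\Delta\}\) and control it two ways: trivially \(|P_{\Delta}|\Delta\le|A|^{2}\), and via convexity \(|P_{\Delta}|\Delta^{3}\ll|A|^{14/5}\log|A|\). Calling \(\Delta_{*}\) the dyadic scale where these bounds meet, one sees that the \(8/5\) exponent is tight precisely when a single level \(P_{\Delta_{*}}\) dominates the energy. Second, at this critical level I would apply a Rudnev-Stevens type collinear-triple incidence bound to the graph \(\{(a,f(a)):a\in A\}\) of a strictly convex function \(f\) encoding \(A\), in order to extract an improved count of additive quadruples \(a-b=c-d\) with \(r_{A-A}(a-b)\sim\Delta_{*}\). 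This should give an energy inequality of the form
\[
    \sum_{d\in P_{\Delta_{*}}}r_{A-A}(d)^{3} \ll |A|^{14/5-\eta}\log^{O(1)}|A|
\]
for some explicit \(\eta>0\) depending on the interplay between the incidence exponent and the convexity gain.

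Plugging the refined estimate back into the Cauchy-Schwarz chain and optimizing the dyadic parameter against \(|A-A|\) would then yield \(|A-A|\gg|A|^{8/5+\eta/2-\epsilon}\); the particular value \(\eta/2=1/3440\) would come from tuning the two free parameters in the incidence input against the dyadic threshold, likely through a short linear program. The main technical hurdle will be producing the strictly improved collinear-triple count on the convex graph: the saving has to be uniform in the dyadic shift structure and must survive the Pl\"unnecke-Ruzsa and Balog-Szemer\'edi-Gowers inflations used to pass back from the popular-difference level set to \(A-A\) itself. A secondary hurdle is ensuring the polylogarithmic losses absorbed into the \(\epsilon\) do not eat the gain, which I expect can be handled by the standard pigeonholing over \(O(\log|A|)\) dyadic scales.
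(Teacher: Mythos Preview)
Your proposal has a genuine gap at the very start: the claimed baseline chain \(\lvert A\rvert^{6}\le\lvert A-A\rvert^{2}E_{3}(A)\) together with an estimate \(E_{3}(A)\ll\lvert A\rvert^{14/5}\) is not how Schoen--Shkredov obtain \(\lvert A-A\rvert\gg\lvert A\rvert^{8/5}\). In fact no bound of the form \(E_{3}(A)\ll\lvert A\rvert^{14/5}\) for convex \(A\) is known; the standard convex Szemer\'edi--Trotter estimate gives only \(E_{3}(A)\lesssim\lvert A\rvert^{3}\), and anything as strong as \(\lvert A\rvert^{14/5}\) would already imply \(E(A)\ll\lvert A\rvert^{12/5}\), better than the current record. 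The actual \(8/5\) argument is the projection inequality \(\lvert A\rvert^{6}\ll E_{3}(A)\sum_{x\in P}\delta_{A-A}(x)\) with \(P\) the set of popular differences, where the right-hand sum is then bounded by incidence methods --- it is not a naked \(\lvert A-A\rvert^{2}\) factor. Because your improvement scheme is built on sharpening a non-existent \(E_{3}\) estimate at the critical dyadic level, the whole structure collapses.

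The paper's proof proceeds quite differently. It first refines the Schoen--Shkredov projection to \(\lvert A\rvert^{6}\ll E_{3}(A)\sum_{x\in P}\delta_{P}(x)\) (popular differences on \emph{both} sides), which after one use of the definition of \(P\) reduces the problem to an upper bound for \(E(A,P)\). The key new ingredient is a bound \(E_{12/5}(A)\lesssim\lvert A\rvert^{38/15}\lvert A-A\rvert^{4/45}\), proved by a second Cauchy--Schwarz/H\"older pass through the popular differences. One then bounds \(E(A,P)\) via a Bloom-style dyadic decomposition of \(\delta_{A,P}\), repeated H\"older with exponents \((3/2,3)\), and Minkowski, feeding in only the convex Szemer\'edi--Trotter bounds \(E_{3}(A,B)\lesssim\lvert A\rvert\lvert B\rvert^{2}\) and \(E_{3/2}(A,B)\lesssim\lvert A\rvert\lvert B\rvert^{5/4}\). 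No collinear-triple bound, no Pl\"unnecke--Ruzsa, and no Balog--Szemer\'edi--Gowers are used; the numerical exponent \(1/3440\) falls out of the resulting system of inequalities rather than from a linear program over incidence parameters.
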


\bigskip

\subsection*{Outline of Proofs}

Improvements due to this paper are almost entirely contained in the following lemmas, which
are refinements of similar results appearing in previous literature. From the following lemmas,
we employ standard methods, given by \cite{rudnev-stevens} and \cite{bloom}, to obtain 
the results above.

Both lemmas involve projecting a set of ``rich'' elements in \(A\) to some ``popular''
elements in \(A-A\) or \(A+A\).
To state these lemmas we need the following standard definitions.
For any set \(S \subset \mathbb{R} \),
we call \(\delta_{S}(x) \) the representations of \(x\) as a difference
in \(S\), so
\[
    \delta_{S} (x) = \# \left\{ (s_1,s_2)\in S^{2} : x = s_1 - s_2 \right\} 
.\]
We call \(\sigma_{S} (x)\) the representations of \(x\) as a sum in \(S\), so 
\[
    \sigma_{S} (x) = \# \left\{ (s_1,s_2)\in S^{2} : x = s_1 + s_2 \right\} 
.\]
For \(k \in \mathbb{R} \), we define the \(k\)-th additive energy \(E_k(A)\) as
\[
    E_{k} (A) = \sum _{x \in A-A} \delta_{A} (x)^{k} 
.\]
We use \(\ll, \gg\) to suppress absolute constants, and \(\lesssim, \gtrsim \) to suppress
powers of \(\log \left\lvert A \right\rvert \), where \(A\) is clear. We use
\(X \approx Y\) to mean \(X \lesssim Y\) and \(Y \lesssim X\).

We proceed with the first lemma, corresponding to the set \(A-A\).
\begin{lemma}\label{lem:diff-proj}
For finite sets \(A \subset \mathbb{R} \), define the ``popular'' differences
\[
P =\left\{x \in A-A: \delta_A(x) \geq \frac{1}{11} \frac{|A|^2}{|A-A|}\right\}
.\]

We have
\begin{equation} \label{eq:diff-proj-eq}
    \left\lvert A \right\rvert ^{6} \ll E_3(A)\cdot \sum _{x \in P} \delta_{P}(x).
\end{equation}
\end{lemma}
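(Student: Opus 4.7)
The plan is to combine a short popularity/inclusion--exclusion argument with a Cauchy--Schwarz applied to triples $(a,b,c) \in A^3$ all of whose pairwise differences lie in $P$, leveraging the identity $\sum_{x,y} d_3(x,y)^2 = E_3(A)$, where $d_3(x,y) := |A \cap (A+x) \cap (A+y)|$ counts the $a \in A$ with $a - x, a - y \in A$. The only real trick is spotting the reparametrization that converts an $A^3$-triangle count into a sum of $d_3(x,y)$ over popular pairs $(x,y)$.

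First I would observe that the non-popular differences carry only a small fraction of the mass of $\delta_A$: by definition of $P$,
\[
\sum_{x \notin P} \delta_A(x) \leq |A-A| \cdot \frac{|A|^2}{11|A-A|} = \frac{|A|^2}{11},
\]
so $\sum_{x \in P} \delta_A(x) \geq \frac{10}{11}|A|^2$, equivalently $\#\{(a,b) \in A^2 : a - b \in P\} \geq \frac{10}{11}|A|^2$. A union bound over the three pairs appearing in a triple $(a,b,c) \in A^3$ then yields
\[
\#\bigl\{(a,b,c) \in A^3 : a - b,\ a - c,\ b - c \in P\bigr\} \geq \bigl(1 - \tfrac{3}{11}\bigr)|A|^3 \gg |A|^3.
\]

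Second, I would reparametrize. For each such triple $(a,b,c)$, set $x := a - b$ and $y := a - c$; then $x, y \in P$, $y - x = b - c \in P$, and $a$ is one of the points counted by $d_3(x,y)$. Conversely, $(a, x, y)$ determines $(b,c) = (a-x,\, a-y)$, so we have a bijection and
\[
\sum_{\substack{(x,y) \in P^2 \\ y - x \in P}} d_3(x,y) \gg |A|^3.
\]
Moreover, the number of admissible pairs is $\#\{(x,y) \in P^2 : y - x \in P\} = \sum_{z \in P} \delta_P(z) = \sum_{x \in P} \delta_P(x)$.

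Third, Cauchy--Schwarz over these admissible pairs gives
\[
|A|^6 \ll \Bigl(\sum_{x \in P} \delta_P(x)\Bigr) \cdot \sum_{\substack{(x,y) \in P^2 \\ y-x \in P}} d_3(x,y)^2 \leq \Bigl(\sum_{x \in P} \delta_P(x)\Bigr) \cdot \sum_{x,y} d_3(x,y)^2,
\]
and the identity $\sum_{x,y} d_3(x,y)^2 = E_3(A)$ is a one-line computation: expanding the square and summing first over $(x,y)$,
\[
\sum_{x,y} d_3(x,y)^2 = \sum_{a_1, a_2 \in A}\bigl(\#\{x : a_1 - x, a_2 - x \in A\}\bigr)^2 = \sum_{a_1, a_2 \in A} \delta_A(a_1-a_2)^2 = E_3(A),
\]
using the bijection $x \leftrightarrow (a_1 - x,\, a_2 - x)$ between $\{x : a_1-x, a_2-x \in A\}$ and $\{(b_1, b_2) \in A^2 : b_1 - b_2 = a_1 - a_2\}$. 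Chaining the last two displays yields \eqref{eq:diff-proj-eq}.
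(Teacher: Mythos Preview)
Your proof is correct and follows the same overall strategy as the paper: produce $\gg |A|^3$ triples in $A^3$ whose three pairwise differences all lie in $P$, then apply Cauchy--Schwarz to the projection sending such a triple to a pair in $\{(x,y)\in P^2 : y-x\in P\}$, with the fiber count bounded by $E_3(A)$. The only real difference is in how the triples are obtained: the paper first isolates a ``rich'' subset $R_A\subset A$ (elements with at least $\tfrac{2}{\sqrt{11}}|A|$ popular differences) and runs an inclusion--exclusion for each fixed $r\in R_A$, whereas your direct union bound over the three bad events ($3/11<1$) gets the same conclusion in one line; the rich-set detour is not actually needed for this lemma, though the paper uses it to mirror the more delicate sumset analogue where the rich-set iteration is essential.
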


Schoen-Shkredov in \cite{schoen-shkredov} obtain the exponent \(\frac{8}{5}\) for the convex difference set by proving
\begin{equation} \label{eq:schoen-shkredov-proj}
    \left\lvert A \right\rvert ^{6} \ll E_3(A) \cdot \sum _{x \in P} \delta_{(A-A)} (x),
\end{equation}
and using \szemeredi-Trotter bounds for the RHS. Lemma \ref{lem:diff-proj} is a
refinement of this, as \(A-A\) is replaced by a popular subset. Bloom in \cite{bloom}
obtained
\begin{equation} \label{eq:bloom-proj}
    \left\lvert A \right\rvert ^{6} \ll E_3(A) \cdot \sum _{x \in A-A} \delta _{P} (x)
\end{equation}
and provided a framework through which the improvement from (\ref{eq:schoen-shkredov-proj}) to
(\ref{eq:bloom-proj}) yields an improvement to Conjecture \ref{conj:convex}.
Using Lemma \ref{lem:diff-proj}, we follow Bloom's framework to obtain
Theorem \ref{thm-CDIFF}.

To prove Lemma \ref{lem:diff-proj}, we project triplets in \(A\) to pairs of differences in \(d_1,d_2 \in A-A\)
such that \(d_1- d_2 \in A-A\) using the following truism
\[
    (r - a_1) - (r - a_2) = a_2 - a_1
.\]
To obtain popular differences instead of ordinary differences, we use the idea of ``rich'' elements, provided by
Rudnev-Stevens in \cite{rudnev-stevens}. The rich elements are those which give a lot of popular differences, i.e.
\[
    R_{A} = \left\{ x \in A : \left\lvert (x - A)\cap P \right\rvert \geq \frac{2}{\sqrt{11} } \left\lvert A \right\rvert  \right\} 
.\]
It turns out that \(\left\lvert R_{A}  \right\rvert \gg \left\lvert A \right\rvert \). Moreover, we see that there are \(\gg \left\lvert A \right\rvert ^{2}\) pairs
\((a_1,a_2)\in A^{2}\) such that \(a_2 - a_1 \in P\), and for a fixed \(r \in R_{A} \) there are \(\gg \left\lvert A \right\rvert ^{2}\) pairs \((a_1,a_2) \in A^{2}\)
such that \(r-a_1,r-a_2 \in P\). We have chosen suitable constants in the definitions of \(P,R_{A} \)
so that by inclusion-exclusion, for any fixed \(r \in R_{A} \), there are \(\gg \left\lvert A \right\rvert ^{2}\) pairs \((a_1,a_2)\in A^{2}\)
such that
\[
    a_2-a_1 \in P,~ r-a_1 \in P,~ r-a_2 \in P
.\]
Seeing that \(\left\lvert R_{A}  \right\rvert \gg \left\lvert A \right\rvert \), we have a set of \(\gg \left\lvert A \right\rvert ^{3} \)
triples \((r,a_1,a_2)\) which map by
\[
    (r,a_1,a_2) \mapsto (r-a_1,r-a_2)
\]
to \(p_1,p_2 \in P\) such that \(p_1-p_2 \in P\). Applying Lemma \ref{lem:CS-proj} gives
Lemma \ref{lem:diff-proj}.

\medskip

We have a corresponding, slightly more technical, lemma for the sumset.
\begin{lemma}\label{lem:sum-proj}
For finite sets \(A,X \subset \mathbb{R} \) define the ``popular'' sums
\[
    P_{A} (X) = \left\{ y \in X + X : \sigma_{X} (y) \geq \frac{\left\lvert X \right\rvert ^{2}}{8  \left\lvert X+X \right\rvert \log \left\lvert A \right\rvert }  \right\} 
,\]
and the ``rich'' set
\[
    R_{A} (X) = \left\{ x \in X : \left\lvert \left( X + x \right) \cap P_{A} (X) \right\rvert \geq \frac{3}{4}  \left\lvert X \right\rvert  \right\} 
.\]
We have

(1)
For sufficiently large finite sets \(A \subset \mathbb{R} \), there exists \(B \subset A\) with \(\left\lvert B \right\rvert \geq \frac{1}{2} \left\lvert A \right\rvert \) such that
\[
    E_{\frac{12}{7} } (R_{A} (B)) \geq \frac{E_{\frac{12}{7} } (B)}{\log \left\lvert A \right\rvert } 
.\]

(2)
There is \(\Delta \in \mathbb{R} \) such that, defining
\[
    P_{\Delta} = \left\{ x : \delta_{R_{A} (B)} (x) \in [\Delta,2\Delta) \right\} 
,\]
we have
\[
    \Delta^{\frac{12}{7} } \left\lvert P_{\Delta}  \right\rvert \approx E_{\frac{12}{7} } (R_{A} (B))\approx E_{\frac{12}{7} } (B)
,\]
and moreover,
\begin{equation} \label{eq:sum-proj-eq}
    \Delta^{2} \left\lvert P_{\Delta}  \right\rvert ^{2} \left\lvert B \right\rvert ^{2} \ll E_3(B) \cdot \# \left\{ p_1 - p_2 = p_3 : p_1,p_2 \in P_{A} (B),~ p_3 \in P_{\Delta}  \right\}.
\end{equation}  
\end{lemma}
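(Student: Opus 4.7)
The plan is to adapt the approach of Lemma~\ref{lem:diff-proj} using the sum identity $(r+a_1)-(r+a_2)=a_1-a_2$ in place of the difference identity, with an extra layer of dyadic pigeonholing to accommodate the $\log|A|$ factor in the definition of $P_A(X)$.

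For part~(1), I would first establish the size bound $|B\setminus R_A(B)| \leq |B|/(2\log|A|)$ by a standard double-count: the total number of ``unpopular'' pairs $(x,y)\in B^2$ with $x+y\notin P_A(B)$ is at most $|B+B|\cdot |B|^2/(8|B+B|\log|A|)=|B|^2/(8\log|A|)$, while each $x\in B\setminus R_A(B)$ contributes at least $|B|/4$ such pairs. I would then iterate by setting $B_0=A$ and $B_{i+1}:=R_A(B_i)$, so that $|B_i| \geq |A|(1-1/(2\log|A|))^i \geq |A|/2$ for all $i \leq \log|A|$. If the conclusion of part~(1) failed at each $B_i$ for $i < \log|A|$, then $E_{12/7}(B_{i+1}) < E_{12/7}(B_i)/\log|A|$ throughout, and chaining gives
\[
(|A|/2)^{12/7} \leq E_{12/7}(B_{\log|A|}) < E_{12/7}(A)/(\log|A|)^{\log|A|} \leq |A|^{19/7}/(\log|A|)^{\log|A|},
\]
contradicting the super-polynomial growth of $(\log|A|)^{\log|A|}$ for $|A|$ sufficiently large.

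For part~(2), a dyadic pigeonhole on the $O(\log|A|)$ ranges $[\Delta,2\Delta)$ in the sum $E_{12/7}(R_A(B))=\sum_x\delta_{R_A(B)}(x)^{12/7}$ selects $\Delta$ with $\Delta^{12/7}|P_\Delta| \gg E_{12/7}(R_A(B))/\log|A|$, which when combined with part~(1) yields the claimed approximate equalities. For \eqref{eq:sum-proj-eq} I would count tuples $(r_1,r_2,a)\in R_A(B)^2\times B$ subject to $r_1-r_2\in P_\Delta$ and $r_1+a, r_2+a\in P_A(B)$: there are $\geq \Delta|P_\Delta|$ pairs $(r_1,r_2)$ satisfying the first condition, and for each, inclusion-exclusion using $|(B+r_i)\cap P_A(B)|\geq 3|B|/4$ yields $\geq |B|/2$ valid choices of $a$, for a total of $T\geq \Delta|P_\Delta||B|/2$ tuples. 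The projection $(r_1,r_2,a)\mapsto(r_1+a,\,r_2+a,\,r_1-r_2)$ lands in the set on the right of \eqref{eq:sum-proj-eq}, and its fiber over $(p_1,p_2,p_3)$ is $\{a\in B:p_1-a,\,p_2-a\in R_A(B)\}$. Cauchy--Schwarz bounds the image size from below by $T^2$ divided by the sum of squared fiber sizes, and after the change of variables $x=a'-a$ this second-moment sum becomes $\sum_x \delta_B(x)\delta_{R_A(B)}(x)^2 \leq E_3(B)$ using the pointwise bound $\delta_{R_A(B)}\leq \delta_B$. Rearranging yields \eqref{eq:sum-proj-eq}.

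The main obstacle I anticipate is in part~(1): even though each iteration shrinks $|B|$ by only a $(2\log|A|)^{-1}$-fraction, ruling out the possibility that $E_{12/7}$ drops by a factor of $\log|A|$ at every step requires the super-polynomial separation $(\log|A|)^{\log|A|}\gg |A|$, which is what forces both the ``sufficiently large'' hypothesis and the threshold $|B|\geq|A|/2$ rather than the stronger $(1-o(1))|A|$ that one might hope for. By contrast, part~(2) is essentially mechanical once the projection identity and Cauchy--Schwarz are set up correctly.
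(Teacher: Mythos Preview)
Your proposal is correct and follows essentially the same approach as the paper: the iteration argument for part~(1), including the size bound $|R_A(X)|\geq(1-1/(2\log|A|))|X|$ and the contradiction via trivial energy bounds, matches the paper's proof exactly (the paper uses the slightly stronger lower bound $E_{12/7}(Z)\geq|Z|^2$, but your $|Z|^{12/7}$ works equally well). For part~(2), your projection $(r_1,r_2,a)\mapsto(r_1+a,r_2+a,r_1-r_2)$ coincides with the paper's map $(r_1,r_2,b)\mapsto(r_1+b,r_2+b)$ since the third coordinate is redundant, and your fiber-by-fiber second-moment bound $\sum_x\delta_B(x)\delta_{R_A(B)}(x)^2\leq E_3(B)$ is precisely the paper's estimate $\#\{f(x_1)=f(x_2)\}\leq E_3(B)$ unpacked one step further.
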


In the spirit of Lemma \ref{lem:diff-proj}, note that
\[
    \# \left\{ p_1 - p_2 = p_3 : p_1,p_2 \in P_{A} (B),~ p_3 \in P_{\Delta}  \right\}  = \sum _{x \in P_{\Delta} } \delta_{P_{A} (B)} (x)
.\]

Rudnev-Stevens in \cite{rudnev-stevens} prove a version of Lemma \ref{lem:sum-proj} where (\ref{eq:sum-proj-eq}) is replaced by
\[
    \Delta^{2} \left\lvert P_{\Delta}  \right\rvert ^{2} \left\lvert B \right\rvert ^{2} \ll E_3(B) \cdot \# \left\{ p_1 - s = p_2 : p_1 \in P_{A} (B),~ p_2 \in P_{\Delta},~ s \in B + B     \right\} 
.\]
Again, our improvement is effectively replacing \(B + B\) with a ``popular'' subset. We follow the 
framework of Rudnev-Stevens to obtain Theorem \ref{thm-CSUM} from Lemma \ref{lem:sum-proj}.

Rudnev-Stevens also provide a framework through which results of type Lemma \ref{lem:sum-proj} can
be turned into results on Conjecture \ref{conj:SP}. This framework was refined by Bloom in \cite{bloom},
leading to the best known Sum-Product exponent. We follow the work of Rudnev-Stevens
and Bloom to obtain Theorem \ref{thm-SP} from Lemma \ref{lem:sum-proj}.

Proving Lemma \ref{lem:sum-proj} is similar to proving Lemma \ref{lem:diff-proj}.
This proof is almost identical to that of \cite{rudnev-stevens}, the only difference being
in the use of inclusion-exclusion to gain an additional popular term.

For part \((1)\), we consider the sequence of sets defined by \(A_0 = A\), \(A_{j+1} = R_{A} (A_{j} )\),
and demonstrate using trivial bounds on \(E_{\frac{12}{7} } \)
that some \(A_{j} \), for \(j \leq \log \left\lvert A \right\rvert \), must be a suitable \(B\)
in the sense of (1). We obtain \(\Delta,P_{\Delta} \) by
dyadic pigeonholing on \(E_{\frac{12}{7} } (R_{A} (B))\).

For (2), we project triplets in \(B\) to pairs of sums \(s_1,s_2 \in B+B\) such that \(s_1-s_2 \in B-B\)
using the following truism
\[
    (r_1 + b) - (r_2 + b) = r_1 - r_2
.\]
There are \(\geq \Delta \left\lvert P_{\Delta}  \right\rvert \) many pairs \((r_1,r_2)\in R_{A} (B)\) such that
\(r_1 - r_2 \in P_{\Delta} \). For each \(r_{i} \), by the definition of \(R_{A} (B)\),
there are \(\geq \frac{3}{4} \cdot \left\lvert B \right\rvert \) choices of \(b\) such that \(r_{i}  + b \in P_{A} (B)\).
By inclusion-exclusion, there are \(\gg \left\lvert B \right\rvert \) choices of \(b \in B\) such that
\(r_1 + b,~ r_2 + b \in P_{A} (B)\). We then have \(\gg \Delta \left\lvert P_{\Delta}  \right\rvert \left\lvert B \right\rvert  \)  triples
\((r_1,r_2,b)\) which under the map
\[
    (r_1,r_2,b) \mapsto (r_1 + b, r_2 + b)
\] 
map to \(p_1,p_2 \in P_{A} (B)\) such that \(p_1 - p_2 \in P_{\Delta} \).
Using Lemma \ref{lem:CS-proj} gives the desired result.

We obtain sum-product results from these Lemmas by bounding the RHS of 
(\ref{eq:diff-proj-eq}) and (\ref{eq:sum-proj-eq}). 
In general, the strategy is first to use \holder's inequality to
bound these in terms of some additive energies.
These energies are then estimated by using the \szemeredi-Trotter theorem \cite{SzT-original}, an upper
bound for incidences between points and lines in the plane. It is in section \ref{section:SzT}
that we collect the energy bounds which are given by the \szemeredi-Trotter theorem.

\bigskip

\subsection*{Notation and Basic Results}
Let \(A\) be a finite set, and \(X(A),Y(A)\) be some quantities depending on \(A\), for example \(\left\lvert A\pm A \right\rvert \), or \(\left\lvert A \right\rvert \).

We say \(X(A) \ll Y(A)\) if \(X(A) = O(Y(A))\) as \(\left\lvert A \right\rvert \to \infty  \).
We say \(X(A) \asymp Y(A)\) if \(X(A) \ll Y(A)\) and \(Y(A) \ll X(A)\). We say \(X(A) \lesssim Y(A)\) if
there is \(c \in \mathbb{R} \) such that \(X(A) \ll Y(A) \log ^{c}\left\lvert A \right\rvert \), and we say
\(X(A) \approx Y(A)\) if \(X(A)\lesssim Y(A)\) and \(Y(A) \lesssim X(A)\).

For \(n \in \mathbb{N} \), we use the notation \([n] = \left\{ 1,2, \ldots , n \right\} \).

We say a finite set \(A = \left\{ a_1 < a_2 < \ldots < a_{n}  \right\} \subset \mathbb{R}  \) is convex
if the sequence \(\left\{ a_{j+1} - a_{j}  \right\} _{j=1} ^{n-1}\) is strictly increasing. 
For any finite \(A \subset \mathbb{R} \) convex, there is a strictly convex smooth function \(f\)
such that \(a_{j} = f(j) \) for all \(j \in [\left\lvert A \right\rvert ]\). 

Denote by \(r_{A+A} (x) \)
the number of representations of \(x\) in the form \(a_1 + a_2\), i.e.
\[
    r_{A+A} (x) = \# \left\{ (a_1,a_2)\in A^{2} : a_1 + a_2 = x \right\} 
.\]
We define \(r_{A-A} (x),~ r_{AA} (x),~ r_{\frac{A}{A} } (x)\) etc. all similarly.

Denote by \(\delta_{A,B}(x) = r_{A-B} (x) \), and \(\delta_{A} (x) = \delta_{A,A} (x)\). Similarly, denote by
\(\sigma_{A,B} (x) = r_{A + B} (x)\) and \(\sigma_{A} (x) = \sigma_{A,A} (x)\). Observe the trivial results
\[
    \left\lvert A \right\rvert \left\lvert B \right\rvert = \sum _{x} \delta_{A,B} (x) = \sum _{x}  \sigma_{A,B} (x)
.\]

We define
\[
    E(A,B) = \# \left\{ (a_1,a_2,b_1,b_2)\in A^{2}\times B^{2} : a_1-b_1 = a_2 - b_2 \right\}
\]
to be the additive energy. See that
\[
   E(A,B) = \sum _{x} \delta_{A,B} (x)^{2} = \sum _{x} \delta_{A} (x) \delta_{B} (x) = \sum _{x}  \sigma_{A,B} (x)^{2}
.\]

By Cauchy-Schwarz, we obtain the inequality
\[
    \left\lvert A \right\rvert \left\lvert B \right\rvert  = \sum _{x} r_{A \pm B} (x) \leq \left\lvert A \pm B \right\rvert ^{\frac{1}{2} } \left( \sum _{x} r_{A \pm B} (x)^{2} \right) ^{\frac{1}{2} }  = \left\lvert A \pm B \right\rvert ^{\frac{1}{2} } E(A,B) ^{\frac{1}{2} }
,\]
which relates the additive energy to the size of the sum and difference sets.

We generalize \(E(A,B)\) to higher energies by
\[
    E_{k} (A,B) : = \sum _{x}  \delta_{A,B} (x)^{k}
.\]
We call \(E_{k} (A) = E_{k} (A,A)\), and \(E(A) = E(A,A)\).

We also define the multiplicative energies as
\[
    E^{\times }_{k} (A,B) = \sum _{x} r_{\frac{A}{B} } (x)^{k}
.\]
We call \(E^{\times }(A,B)= E^{\times }_{2} (A,B)\). We call \(E_{k} ^{\times }(A) = E_{k} ^{\times }(A,A)\), and \(E^{\times }(A) = E^{\times }(A,A)\).

For real valued functions \(f\) with finite support, and for \(p \in [1,\infty )\), the \(\ell ^{p}\) norm of \(f\) is
defined by
\[
    \left\lVert f \right\rVert _{p} = \left( \sum _{x} \left\lvert f(x) \right\rvert ^{p} \right) ^{\frac{1}{p} }
.\]
For example, for \(k \in [1,\infty )\),
\[
    E_{k} (A,B)^{\frac{1}{k} } = \left\lVert \delta_{A,B}  \right\rVert _{k} 
.\]

Finally, we record the following ``projection'' lemma.
\begin{lemma}\label{lem:CS-proj}
Let \(f:X\to Y\) be a map between finite sets. We have
\[
    \left\lvert X \right\rvert ^{2} \leq \left\lvert Y \right\rvert \cdot \# \left\{ (x_1,x_2)\in X^{2} : f(x_1)= f(x_2) \right\}
.\]
\end{lemma}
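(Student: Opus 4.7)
The plan is to prove this by a direct application of the Cauchy-Schwarz inequality to the fiber sizes of $f$.

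First I would set up the notation: for each $y \in Y$, let $n_y = \#\{x \in X : f(x) = y\}$ denote the size of the fiber of $f$ over $y$. Since every element of $X$ lies in exactly one fiber, we have the identity
\[
    \sum_{y \in Y} n_y = |X|.
\]
On the other hand, the number of collision pairs can be written as a sum over fibers,
\[
    \#\{(x_1,x_2) \in X^2 : f(x_1) = f(x_2)\} = \sum_{y \in Y} n_y^2,
\]
since a pair $(x_1,x_2)$ satisfies $f(x_1) = f(x_2)$ if and only if both elements lie in the same fiber.

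Next I would apply Cauchy-Schwarz to $\sum_{y \in Y} n_y = \sum_{y \in Y} 1 \cdot n_y$ to obtain
\[
    |X|^2 = \left( \sum_{y \in Y} n_y \right)^2 \leq |Y| \cdot \sum_{y \in Y} n_y^2 = |Y| \cdot \#\{(x_1,x_2)\in X^2 : f(x_1)=f(x_2)\},
\]
which is exactly the claimed inequality. There is no real obstacle here; the lemma is essentially the statement that Cauchy-Schwarz gives a lower bound on the second moment of a probability-like sequence in terms of its first moment and the size of the index set, and it is invoked throughout the paper precisely to convert ``many triples projecting to some target set'' into ``many collisions in the target set''.
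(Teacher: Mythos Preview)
Your proof is correct and is essentially identical to the paper's: both write $|X|$ as the sum of fiber sizes $\sum_{y\in Y}\#\{x:f(x)=y\}$ and apply Cauchy--Schwarz to obtain the inequality.
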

\begin{proof}
Using Cauchy-Schwarz gives
\begin{align*}
    \left\lvert X \right\rvert & = \sum _{y \in Y} \# \left\{ x : f(x) = y \right\} \\
    & \leq \left\lvert Y \right\rvert^{\frac{1}{2} } \cdot \# \left\{ (x_1,x_2)\in X^{2} : f(x_1) = f(x_2) \right\}^{\frac{1}{2} } \qedhere
\end{align*}
\end{proof}

\medskip

\subsection*{Organization of Paper}
In section \ref{section:lemmas} we prove the key Lemmas \ref{lem:diff-proj} and \ref{lem:sum-proj}. In section 
\ref{section:SzT} we collect the standard convex \szemeredi-Trotter bounds, along
with the general bounds of \cite{rudnev-stevens} and \cite{bloom}. Sections \ref{section:SP}, \ref{section:CSUM}, and
\ref{section:CDIFF} follow the methods of Rudnev-Stevens and Bloom to obtain
Theorems \ref{thm-SP}, \ref{thm-CSUM}, and \ref{thm-CDIFF} respectively from the lemmas obtained in section \ref{section:lemmas}.

\begin{acknowledgement}
I am extremely grateful to Shukun Wu for the many conversations and sustained support he has provided,
and also for getting me interested in this problem. I thank Ilya Shkredov for his
time and his helpful guidance. I thank Thomas Bloom for answering my questions about his work.
Finally, I thank the Department of Mathematics at Indiana University and its faculty for hosting the REU program
during which I began working on this project.
\end{acknowledgement}

\bigskip

\section{Proof of Key Lemmas}\label{section:lemmas}
Recall that we have defined
\[
P =\left\{x \in A-A: \delta_A(x) \geq \frac{1}{11} \frac{|A|^2}{|A-A|}\right\}
.\]
By the definition of \(P\),
\[
    \sum _{x \not \in P} \delta_{A} (x) < \frac{1}{11} \frac{\left\lvert A \right\rvert ^{2}}{\left\lvert A-A \right\rvert } \cdot \left\lvert A-A \right\rvert 
,\]
and hence
\[
\sum_{x \in P } \delta_A(x) \geq \frac{10}{11}|A|^2
.\]

Define the ``rich'' elements of \(A\) to be
\[
R_{A}  =\left\{x \in A:|(x-A) \cap P | \geq \frac{2}{\sqrt{11}}|A|\right\}
.\]
See that with these definitions of \(R_{A} , P \), we have \(\left\lvert R_{A}  \right\rvert \gg \left\lvert A \right\rvert  \). 

Indeed, we partition \( \sum _{x \in P} \delta_{A} (x)\) as
\begin{align*}
    \frac{10}{11} \left\lvert A \right\rvert ^{2} & \leq \sum _{x \in P} \delta_{A} (x) = \# \left\{ (a_1,a_2)\in A^{2}: a_1-a_2 \in P \right\}\\
    & = \# \left\{ (r,a)\in R_{A} \times A  : r - a \in P\right\} + \# \left\{ (n,a)\in \left( A \setminus R_{A}  \right) \times A : n - a \in P \right\} 
\end{align*}
Bounding the first term trivially, and the second term using the definition of \(R_{A} \), we obtain
\begin{align*}
    \frac{10}{11} \left\lvert A \right\rvert ^{2} & \leq \left\lvert R_{A}  \right\rvert \left\lvert A \right\rvert  + \frac{2}{\sqrt{11} } \left\lvert A \right\rvert \left( \left\lvert A \right\rvert - \left\lvert R_{A}  \right\rvert  \right) \\
    &  = \left\lvert R_{A}  \right\rvert \left\lvert A \right\rvert \left( 1 - \frac{2}{\sqrt{11} }  \right) + \frac{2}{\sqrt{11} } \left\lvert A \right\rvert ^{2}.
\end{align*}
Simplifying gives
\[
|R_{A} |\geq \left(\frac{10}{11}-\frac{2}{\sqrt{11}}\right) \cdot \frac{1}{1-\frac{2}{\sqrt{11}}}|A| > \frac{1}{2}|A|
.\]

Define
\[
S_1=\left\{\left(a_1, a_2\right) \in A^2: a_1-a_2 \in P \right\}
,\]
and
\[
S_r=\left\{\left(a_1, a_2\right) \in A^2: r-a_1, r-a_2 \in P \right\}
.\]
By the definition of \(P\),
\[
\left|S_1\right|=\sum_{x \in P} \delta_A(x) \geq \frac{10}{11}|A|^2
.\]
By the definition of \(R_{A} \),
\[
\left|S_r\right| \geq\left(\frac{2}{\sqrt{11}}|A|\right)^2 \geq \frac{4}{11}|A|^2
.\]

Therefore, for any \(r \in R_{A} \),
\[
\begin{aligned}
\left|S_1 \cap S_r\right| & =\left|S_1\right|+\left|S_r\right|-\left|S_1 \cup S_r\right| \\
& \geq \frac{10}{11}|A|^2+\frac{4}{11}|A|^2-|A|^2 \\
& \gg|A|^2 .
\end{aligned}
\]

We have shown that for any \(r \in R_{A} \),
\[
\left|\left\{\left(a_1, a_2\right) \in A^2: r-a_1, r-a_2, a_1-a_2 \in P \right\}\right| \gg|A|^2
,\]
or equivalently
\[
\left|\left\{\left(r, a_1, a_2\right) \in R_{A}  \times A^2: r-a_1, r-a_2, a_1-a_2 \in P \right\}\right| \gg|R_{A} ||A|^2 \gg|A|^3
.\]

We now project these triplets \((r,a_1,a_2)\) to pairs \((p_1,p_2)\in P^{2}\) for which \(p_1-p_2 \in P\).
Let
\[
S=\left\{\left(r, a, a^{\prime}\right) \in R_{A}  \times A^2: r-a, r-a^{\prime}, a-a^{\prime} \in P  \right\}
.\]
We have just demonstrated that \(\left\lvert S \right\rvert \gg \left\lvert A \right\rvert ^{3} \). Define the map \(f : A ^{3} \to (A-A) ^{2}\) by
\[
f:\left(r, a, a^{\prime}\right) \mapsto\left(r-a^{\prime}, r-a\right)
.\]
By the definition of \(S\), \(f(S) \subset Y\), where
\[
Y=\left\{\left(p_1, p_2\right) \in P ^2: p_1-p_2 \in P \right\}
.\]
We wish to apply Lemma \ref{lem:CS-proj}, and so we must consider
\[
    \left\lvert \left\{ (s_1,s_2)\in S^{2} : f(s_1)= f(s_2) \right\}  \right\rvert 
.\]
If we let \(s_i=\left(r_i, a_i, a_i^{\prime}\right)\), simply using the definition of \(f\) gives
\begin{align*}
    f(s_1) = f(s_2)&  \iff r_1-a_1=r_2-a_2,~ r_1-a_1^{\prime}=r_2-a_2^{\prime}\\
    & \iff r_1-r_2=a_1-a_2=a_1^{\prime}-a_2^{\prime}
\end{align*}
Therefore, using the fact that \(S \subset A^{3}\),
\begin{align*}
\# \left\{ (s_1,s_2) \in S^{2}  : f(s_1) = f(s_2)\right\}  = \# \left\{ (s_1,s_2)\in S^{2} : r_1-r_2 = a_1-a_2 = a_1' - a_2' \right\}      \\
 \leq \# \left\{\left(x_1, \ldots, x_6\right) \in A^6: x_1-x_2=x_3-x_4=x_5-x_6\right\} =E_3(A).
\end{align*}

We will now apply Lemma \ref{lem:CS-proj}. We get
\[
|S|^2 \leq|Y|\left|\left\{\left(s_1, s_2\right) \in S^2: f\left(s_1\right)=f\left(s_2\right)\right\}\right|
.\]
Using \(\left\lvert S \right\rvert \gg \left\lvert A \right\rvert ^{3} \) and
\[
    \left|\left\{\left(s_1, s_2\right) \in S^2: f\left(s_1\right)=f\left(s_2\right)\right\}\right| \leq E_3(A)
\]
gives
\[
\left\lvert A \right\rvert ^{6} \ll E_3(A) \left\lvert Y \right\rvert
.\]
Seeing that, by definition,
\[
    \left\lvert Y \right\rvert  = \sum _{x \in P} \delta_{P} (x)
,\]
Lemma \ref{lem:diff-proj} is proven. We proceed with Lemma \ref{lem:sum-proj},
which is argued similarly.

Let \(A\) be a sufficiently large (to be defined later) finite set.
We first demonstrate the existence of \(B \subset A\) with \(\left\lvert B \right\rvert \geq \frac{1}{2} \left\lvert A \right\rvert \) and
\[
    E_{\frac{12}{7} } \left( R_{A} (B) \right) \geq \frac{E_{\frac{12}{7} } (B)}{\log \left\lvert A \right\rvert }
.\]

Recall that we defined
\[
P_{A} (X) = \left\{ y \in X + X : \sigma_{X} (y) \geq \frac{\left\lvert X \right\rvert ^{2}}{8  \left\lvert X+X \right\rvert \log \left\lvert A \right\rvert }  \right\} 
\]
and
\[
    R_{A} (X) = \left\{ x \in X : \left\lvert \left( X + x \right) \cap P_{A} (X) \right\rvert \geq \frac{3}{4}  \left\lvert X \right\rvert  \right\} 
.\]
Observe that, by the definition of \(P_{A} (X)\),
\[
    \sum _{y \not \in P_{A} (X)} \sigma_{X} (y) < \frac{\left\lvert X \right\rvert ^{2}}{8 \log \left\lvert A \right\rvert } 
,\]
so
\[
    \sum _{y \in P_{A} (X)} \sigma_{X} (y) \geq \left\lvert X \right\rvert ^{2} \left( 1 - \frac{1}{8 \log \left\lvert A \right\rvert }  \right)
.\]

We show that \(R_{A} (X)\) is ``large''. We have just shown
\begin{equation} \label{eq:sum-proj-rich-bound}
    \left\lvert X \right\rvert ^{2} \left( 1 - \frac{1}{8 \log \left\lvert A \right\rvert }  \right)  \leq \sum _{y \in P_{A} (X)} \sigma_{X} (y) = \# \left\{ (x_1,x_2) \in X^{2}: x_1+x_2 \in P_{A} (X) \right\}.
\end{equation}
We partition the set in the RHS into
\[
    \left\{ (r,x)\in R_{A} (X) \times X : r+x \in P_{A} (X)  \right\} \bigsqcup  \left\{ (n,x) \in \left( X \setminus R_{A} (X) \right) \times X : n + x \in P_{A} (X)\right\} 
.\]
Using the trivial bound on the first set, and the definition of \(R_{A} (X)\) for the second set,
substituting into (\ref{eq:sum-proj-rich-bound}) gives
\[
   \left\lvert X \right\rvert ^{2} \left( 1 - \frac{1}{8 \log \left\lvert A \right\rvert }  \right)  \leq \left\lvert R_{A} (X) \right\rvert \left\lvert X \right\rvert + \left( \left\lvert X \right\rvert - \left\lvert R_{A} (X) \right\rvert  \right) \cdot \frac{3}{4} \cdot  \left\lvert X \right\rvert,
\]
which by simplifying gives
\[
    \left\lvert R_{A} (X) \right\rvert \geq \left( 1 - \frac{1}{2 \log \left\lvert A \right\rvert }  \right) \left\lvert X \right\rvert 
.\]

Suppose now, for the sake of contradiction, that for all \(B \subset A\) with \(\left\lvert B \right\rvert \geq \frac{1}{2} \left\lvert A \right\rvert \),
\[
    E_{\frac{12}{7} } \left( R_{A} (B) \right) < \frac{E_{\frac{12}{7} } (B)}{\log \left\lvert A \right\rvert } 
.\]
We apply \(R_{A} \) iteratively. Namely, consider the sequence of sets \(\left\{ A_{i}  \right\} \) defined by \(A_{0} = A\) and \(A_{i+1} = R_{A} (A_{i} )\).
See that for \(i \leq \log \left\lvert A \right\rvert \),
\[
    \left\lvert A_{i} \right\rvert  = \left\lvert R_{A} ^{(i)}(A) \right\rvert \geq \left( 1 - \frac{1}{2 \log \left\lvert A \right\rvert }  \right) ^{i} \left\lvert A \right\rvert \geq \left( 1 - \frac{1}{2 \log \left\lvert A \right\rvert }  \right) ^{\log \left\lvert A \right\rvert } \cdot \left\lvert A \right\rvert \geq \frac{1}{2} \cdot \left\lvert A \right\rvert 
,\]
where the last inequality holds for \(A\) sufficiently large. Note that this is the
first of 2 thresholds on the size of \(A\).

Therefore, for all \(i \leq \log \left\lvert A \right\rvert \), because \(\left\lvert A_{i}  \right\rvert \geq \frac{1}{2} \cdot \left\lvert A \right\rvert  \),
we have supposed for contradiction that
\[
    E_{\frac{12}{7} } \left( A_{i+1}  \right) = E_{\frac{12}{7} } \left( R_{A} (A_{i} ) \right) < \frac{E_{\frac{12}{7} } (A_{i} )}{\log \left\lvert A \right\rvert } 
,\]
which upon iterating gives
\begin{equation} \label{eq:sum-proj-contradiction}
    E_{\frac{12}{7} } \left( A_{\left\lfloor \log \left\lvert A \right\rvert  \right\rfloor }  \right) < \frac{E_{\frac{12}{7} } (A)}{\left( \log \left\lvert A \right\rvert \right)  ^{\left\lfloor\log  \left\lvert A \right\rvert  \right\rfloor }}.
\end{equation}  
Trivially, we have, for any set \(Z \subset \mathbb{R} \), \(\left\lvert Z \right\rvert ^{2} \leq E_{\frac{12}{7} } (Z) \leq \left\lvert Z \right\rvert ^{3} \).
Indeed
\[
    \left\lvert Z \right\rvert ^{2} = \sum _{x} \delta_{Z} (x) \leq \sum _{x} \delta_{Z} (x)^{\frac{12}{7} } = E_{\frac{12}{7} } \left( Z \right)
,\]
and
\[
    E_{\frac{12}{7} } (Z) = \sum _{x} \delta_{Z} (x)^{\frac{12}{7} } \leq \left\lvert Z \right\rvert ^{\frac{5}{7} } \sum _{x} \delta_{Z} (x) \leq   \left\lvert Z \right\rvert ^{3}
.\]
Using these bounds in (\ref{eq:sum-proj-contradiction}) gives
\[
    \frac{1}{4} \left\lvert A \right\rvert ^{2} \leq \left\lvert A_{\left\lfloor \log \left\lvert A \right\rvert  \right\rfloor }  \right\rvert ^{2} \leq E_{\frac{12}{7} } \left( A_{\left\lfloor \log \left\lvert A \right\rvert  \right\rfloor }  \right) <\frac{ E_{\frac{12}{7} }(A)}{\left( \log \left\lvert A \right\rvert \right)  ^{\left\lfloor \log \left\lvert A \right\rvert  \right\rfloor }}   < \frac{\left\lvert A \right\rvert ^{3} }{\left( \log \left\lvert A \right\rvert \right)  ^{\left\lfloor \log \left\lvert A \right\rvert  \right\rfloor }} 
,\]
a contradiction for \(A\) sufficiently large. This is the second and final threshold on the size of \(A\).

We have demonstrated that for finite \(A\) larger than some absolute constant, there is \(B \subset A\) with \(\left\lvert B \right\rvert \geq \frac{1}{2} \left\lvert A \right\rvert \) and
\[
    E_{\frac{12}{7} } (R_{A} (B)) \geq \frac{E_{\frac{12}{7} } (B)}{\log \left\lvert A \right\rvert } 
.\]

A standard dyadic pigeonholing argument on \(E_{\frac{12}{7} } (R_{A} (B))\) gives the existence of \(\Delta \in \mathbb{R} ^{+}\) such that,
defining
\[
    P_{\Delta} = \left\{ x : \delta_{R_{A} (B)}(x) \in [\Delta,2\Delta) \right\} 
,\]
we have
\[
    \Delta^{\frac{12}{7} } \left\lvert P_{\Delta}  \right\rvert \approx E_{\frac{12}{7} } (R_{A} (B)) \approx E_{\frac{12}{7} } (B)
.\]
It remains to be shown that
\[
    \Delta^{2} \left\lvert P_{\Delta}  \right\rvert ^{2}\left\lvert B \right\rvert ^{2} \ll E_{3} (B) \cdot \# \left\{ p_1 - p_2 = p_3 : p_1,p_2 \in P_{A} (B),~ p_3 \in P_{\Delta}  \right\} 
.\]

This follows by an almost identical projection as in the proof of Lemma \ref{lem:diff-proj}.
Define
\[
    X = \left\{ (r_1,r_2,b)\in R_{A} (B)^{2}\times B : r_1 + b \in P_{A} (B) ,~ r_2 + b \in P_{A} (B),~ r_1 -r_2 \in P_{\Delta}  \right\} 
.\]
Define \(f : B^{3} \to \left( B+ B \right) ^{2}\) by
\[
    f : (r_1,r_2,b) \mapsto (r_1 + b , r_2 + b)
.\]
By the definition of \(X\), \(f(X) \subset Y\) where
\[
    Y = \left\{ (p_1,p_2)\in P_{A} (B)^{2} : p_1 - p_2 \in P_{\Delta}  \right\} 
.\]
Lemma \ref{lem:CS-proj} gives
\[
    \left\lvert X \right\rvert ^{2} \leq \left\lvert Y \right\rvert \# \left\{ (x_1,x_2)\in X^{2} : f(x_1) = f(x_2) \right\} 
.\] 
Notice that, by the exact same argument as before,
\[
    \# \left\{ \left( x_1,x_2 \right) \in X^{2} : f(x_1) = f(x_2) \right\} \leq E_3(B)
,\]
so
\[
    \left\lvert X \right\rvert ^{2}\leq E_3(B) \left\lvert Y \right\rvert 
.\]
By definition,
\[
    \left\lvert Y \right\rvert = \sum _{x \in P_{\Delta} } \delta_{P_{A} (B)} (x)=\# \left\{ p_1 - p_2 = p_3 : p_1,p_2 \in P_{A} (B), p_3 \in P_{\Delta} \right\} 
,\]
so it remains to show that \(\left\lvert X \right\rvert \gg \Delta \left\lvert P_{\Delta}  \right\rvert \left\lvert B \right\rvert \).

Notice that, by the definition of \(\Delta,P_{\Delta} \),
\[
    \Delta \left\lvert P_{\Delta}  \right\rvert \asymp \# \left\{ (r_1,r_2)\in R_{A} (B)^{2} : r_1 - r_2 \in P_{\Delta}  \right\} 
.\]
Call
\[
    X_{r} = \left\{ x \in B : r + x \in P_{A} (B) \right\} 
,\]
and see that for any \(r \in R_{A} (B)\), by the definition of \(R_{A} (B)\), \(\left\lvert X_{r}  \right\rvert \geq \frac{3}{4}  \left\lvert B \right\rvert    \).
Using Inclusion-Exclusion, for any \(r_1,r_2 \in R_{A} (B)\),
\[
    \left\lvert X_{r_1} \cap X_{r_2}  \right\rvert \gg \left\lvert B \right\rvert
.\]
See that we may partition \(X\) as
\[
    X = \bigsqcup _{\substack{ (r_1,r_2) \in R_{A} (B)^{2} \\ r_1 - r_2 \in P_{\Delta}  }} \left\{ (r_1,r_2,b) : b \in X_{r_1} \cap X_{r_2}  \right\} 
,\]
and hence
\[
    \left\lvert X \right\rvert = \sum _{\substack{ (r_1,r_2)\in R_{A} (B)^{2} \\ r_1 - r_2 \in P_{\Delta}  }} \left\lvert X_{r_1} \cap X_{r_2}  \right\rvert \gg \left\lvert B \right\rvert \Delta \left\lvert P_{\Delta}  \right\rvert 
.\]
We have demonstrated that
\[
    \Delta^{2} \left\lvert P_{\Delta}  \right\rvert ^{2} \left\lvert B \right\rvert ^{2} \ll E_3(B) \cdot \# \left\{ p_1 - p_2 = p_3 : p_1,p_2 \in P_{A} (B),~ p_3 \in P_{\Delta}  \right\} 
,\]
so Lemma \ref{lem:sum-proj} is proven.

\bigskip

\section{\szemeredi-Trotter Lemmas}\label{section:SzT}
This section develops general energy estimates using the \szemeredi-Trotter theorem.
This section contains only restatements of existing results.
The following lemma is a slight modification of the \szemeredi-Trotter theorem, a proof
of which can be found in \cite{solymosi-szt}. Under the additional assumption that the point set
\(P\) is a Cartesian product, and the line set \(L\) contains no lines parallel to the axes,
one can omit the \(+ \left\lvert P \right\rvert \) term in the \szemeredi-Trotter theorem.

\begin{lemma}\label{lem:solymosi-szt}    
Let \(A,B \subset \mathbb{R}\) be finite, let \(P = A \times B\), and let \(L\) be either

(a) A finite set of lines whose slopes are finite nonzero real numbers

(b) Finitely many translates of a smooth convex curve

Then the number of incidences between \(P\) and \(L\) is \(\ll \left\lvert P \right\rvert ^{\frac{2}{3} }\left\lvert L \right\rvert ^{\frac{2}{3} } + \left\lvert L \right\rvert     \)
\end{lemma}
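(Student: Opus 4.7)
The plan is to deduce this variant from the classical \szemeredi-Trotter bound $I(P,L) \ll |P|^{2/3}|L|^{2/3} + |P| + |L|$ by showing that the offending $|P|$ term can always be absorbed. The entire input from the Cartesian product structure is the per-curve observation that in both cases (a) and (b), every $\ell \in L$ meets $P = A \times B$ in at most $O(\min(|A|,|B|)) \leq O(|P|^{1/2})$ points. In case (a), a line $y = mx + c$ with $m \neq 0$ is the graph of a function, so it contains at most one point of $P$ per element of $A$ (and symmetrically per element of $B$). In case (b), realizing the convex curve as (a portion of) the graph of a strictly convex smooth function shows that each translate meets every vertical line at most once and every horizontal line at most twice, yielding the same conclusion up to a constant.

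From here, I would split on the size of $|L|$ relative to $|P|^{1/2}$. If $|L| \geq |P|^{1/2}$, then $|P| \leq |L|^2$, so $|P| = |P|^{2/3}|P|^{1/3} \leq |P|^{2/3}|L|^{2/3}$; the $|P|$ term in classical \szemeredi-Trotter is already dominated by the main term, and the desired bound follows directly. If instead $|L| < |P|^{1/2}$, I would discard \szemeredi-Trotter entirely and use only the trivial bound $I(P,L) \ll |P|^{1/2}|L|$ obtained by multiplying the per-curve count by $|L|$. A one-line check shows $|P|^{1/2}|L| \leq |P|^{2/3}|L|^{2/3}$ exactly when $|L|^2 \leq |P|$, which is precisely the regime under consideration.

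The only step with any subtlety is the per-curve bound in case (b): one must interpret ``smooth convex curve'' as (a portion of) the graph of a strictly convex function, so that translates really do meet vertical lines once and horizontal lines at most twice. Everything else is arithmetic against the classical bound, which is consistent with the statement being billed as a minor refinement of the standard \szemeredi-Trotter theorem rather than a genuinely new incidence result.
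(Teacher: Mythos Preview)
The paper does not actually prove this lemma; it simply states that ``a proof \ldots\ can be found in \cite{solymosi-szt}'' and moves on. So there is no in-paper argument to compare against, and your derivation stands on its own.

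Your argument is correct. One point worth making explicit concerns case (b): when you invoke the ``classical \szemeredi-Trotter bound'' in the regime $|L| \geq |P|^{1/2}$, you need the pseudoline version of the theorem, not just the version for straight lines. This is fine because distinct translates of the graph of a strictly convex function $f$ meet in at most one point: if $a_1 \neq a_2$ then $x \mapsto f(x-a_1) - f(x-a_2)$ has derivative $f'(x-a_1) - f'(x-a_2)$, which has constant sign by strict convexity, so the equation $f(x-a_1) - f(x-a_2) = b_2 - b_1$ has at most one solution. Hence $L$ is a genuine family of pseudolines and the bound $I(P,L) \ll |P|^{2/3}|L|^{2/3} + |P| + |L|$ is available. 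With that remark in hand, both of your cases go through exactly as written.
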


This yields 2 energy bounds which will be important, one in the convex case and one in the general case.
We provide the convex result first.

\begin{lemma}\label{lem:convex-szt}
Let \(A \subset \mathbb{R} \) be convex. We have

(1) For all \(B \subset \mathbb{R} \),
\[
    E_3(A,B)\lesssim \left\lvert A \right\rvert \left\lvert B \right\rvert ^{2}
.\]

(2) For all \(s \in (1,3)\),
\[
    E_{s} (A,B) \lesssim \left\lvert A \right\rvert \left\lvert B \right\rvert ^{\frac{s+ 1}{2} }
.\]
\end{lemma}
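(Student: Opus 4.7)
My plan is to prove part (1) by a dyadic decomposition of $\delta_{A,B}$ combined with the convex Szemer\'edi--Trotter bound of Lemma \ref{lem:solymosi-szt}(b), and then derive part (2) from part (1) by H\"older interpolation against the trivial $\ell^{1}$ identity $E_{1}(A,B) = |A||B|$.

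For part (1), since $A$ is convex I write $A = \{f(1),\ldots,f(|A|)\}$ for a strictly convex smooth $f : \mathbb{R}\to\mathbb{R}$, and dyadically partition the support of $\delta_{A,B}$ into the level sets $D_t = \{x : \delta_{A,B}(x) \in [t,2t)\}$. For each dyadic $t$, I set up an incidence problem with the Cartesian point set $P = [|A|] \times B$ (of size $|A||B|$) and the curves $\Gamma_x = \{(u,f(u)-x) : u \in \mathbb{R}\}$ for $x \in D_t$, which are translates of the smooth convex graph $\{(u,f(u))\}$. A point $(i,b) \in P$ lies on $\Gamma_x$ precisely when $a_i - b = x$, so the total incidence count equals $\sum_{x\in D_t}\delta_{A,B}(x) \geq t|D_t|$. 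Applying Lemma \ref{lem:solymosi-szt}(b) and combining with the $\ell^{1}$-identity $\sum_x\delta_{A,B}(x) = |A||B|$ should produce the key estimate $t^{3}|D_t| \lesssim |A||B|^{2}$; summing over the $O(\log|A|)$ dyadic levels then yields $E_3(A,B) \lesssim |A||B|^{2}$.

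For part (2), I interpolate via H\"older's inequality with the conjugate pair $p = 2/(3-s)$, $q = 2/(s-1)$ (so that $1/p + 1/q = 1$). Writing $\delta(x)^{s} = \delta(x)^{1/p} \cdot \delta(x)^{3/q}$ and applying H\"older gives
\[
    E_s(A,B) \leq E_1(A,B)^{(3-s)/2} \cdot E_3(A,B)^{(s-1)/2} \lesssim (|A||B|)^{(3-s)/2}(|A||B|^{2})^{(s-1)/2} = |A||B|^{(s+1)/2},
\]
which is the claimed inequality.

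The main obstacle is obtaining the $|A|$-asymmetric bound $t^{3}|D_t| \lesssim |A||B|^{2}$ in part (1): a direct application of Lemma \ref{lem:solymosi-szt}(b) only yields $t^{3}|D_t| \lesssim (|A||B|)^{2}$, corresponding to the weaker $E_3(A,B) \lesssim |A|^{2}|B|^{2}$. Since the translates $\Gamma_x$ are pairwise disjoint (being vertical translates of a fixed convex graph), recovering the correct exponent of $|A|$ requires carefully trading off the Szemer\'edi--Trotter bound against the $\ell^{1}$-identity $\sum_x\delta_{A,B}(x) = |A||B|$, possibly with an additional layer of dyadic pigeonholing to isolate the regime in which each bound is tight.
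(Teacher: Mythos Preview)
Your setup for part (1) has a genuine gap that you have correctly diagnosed but not resolved. With only $|D_t|$ curves, the incidence bound from Lemma \ref{lem:solymosi-szt}(b) gives
\[
    t|D_t| \ll (|A||B|)^{2/3}|D_t|^{2/3} + |D_t|,
\]
hence $t^{3}|D_t| \ll |A|^{2}|B|^{2}$, and no combination of this with the $\ell^{1}$-identity $t|D_t| \leq |A||B|$ or further dyadic pigeonholing will recover the missing factor of $|A|^{-1}$. The two bounds together only interpolate to $t^{3}|D_t| \ll |A|^{2-\theta}|B|^{2-\theta}$ for various $\theta$, never to the asymmetric $|A||B|^{2}$.

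The paper closes this gap by an \emph{amplification} trick that exploits the specific structure of the domain $[|A|]$. Writing $A=\{f(n):n\in[|A|]\}$, each index $n\le |A|/2$ has $\gg |A|$ representations $n=m_1-m_2$ with $m_1,m_2\in[|A|]$. This lets one replace the sum $\sum_{n}\sigma_{D_k,B}(f(n))$ by $|A|^{-1}\sum_{m_1,m_2}\sigma_{D_k,B}(f(m_1-m_2))$, and the latter is an incidence count between $[|A|]\times B$ and the family of $|A|\cdot|D_k|$ translates $x\mapsto f(x-n)-d$, $n\in[|A|]$, $d\in D_k$. With $|A|$ times as many curves, Szemer\'edi--Trotter now yields
\[
    |A|\cdot k|D_k| \ll (|A|^{2}|D_k||B|)^{2/3},
\]
which rearranges to the desired $k^{3}|D_k|\ll |A||B|^{2}$. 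This extra layer of curves is the missing idea in your proposal; your vague trade-off against the $\ell^{1}$-identity does not substitute for it.

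Your part (2) is correct and matches the paper's interpolation exactly.
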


\begin{proof}
Fix \(B \subset \mathbb{R} \). We will prove (1) first, and (2) will follow immediately
after by interpolating for \(E_{s}(A,B) \) between \(E_3(A,B)\) and \(E_1(A,B)= \left\lvert A \right\rvert \left\lvert B \right\rvert  \). 

A standard dyadic partitioning gives \(k \in \mathbb{N} \) and
\[
    D_{k} = \left\{ x \in  A- B : \delta_{A,B} (x) \in [k,2k) \right\}  
\]
such that
\[
    k^{3} \left\lvert D_{k}  \right\rvert \approx E_3(A,B)
.\]

By the definition of \(D_{k} \),
\[
    k \left\lvert D_{k}  \right\rvert \leq \sum _{x \in D_{k} } \delta_{A,B} (x)
.\]
Since \(A\) is convex, let \(f\) be the convex function for which \(A = \left\{ f(j) : j \in \left[ \left\lvert A \right\rvert  \right]  \right\} \).
We have
\[
    \sum _{x \in D_{k} } \delta_{A,B} (x) = \sum _{x \in A} \sigma_{D_{k} ,B} (x) = \sum _{ n \in [\left\lvert A \right\rvert ]} \sigma_{D_{k} ,B} \left( f(n) \right) 
.\]

Fix some \(n \in [\left\lvert A \right\rvert ]\). If \(n \leq \frac{1}{2} \left\lvert A \right\rvert \), there are \(\gg \left\lvert A \right\rvert   \) solutions to
\begin{equation} \label{eq:n-difference-split}
    n = m_1 - m_2 : m_1,m_2 \in [\left\lvert A \right\rvert ].
\end{equation}
If \(n\geq  \frac{1}{2} \cdot \left\lvert A \right\rvert \), there are \(\gg \left\lvert A \right\rvert  \)
solutions to
\begin{equation} \label{eq:n-sum-split}
    n = m_1 + m_2 : m_1,m_2 \in [\left\lvert A \right\rvert ].
\end{equation}
It is plain that at least one of
\begin{equation} \label{eq:convex-wlog}
    \sum _{n \leq \frac{1}{2} \cdot \left\lvert A \right\rvert } \sigma_{D_{k} ,B} (f(n)) \geq \frac{1}{2} \cdot  \sum _{n \in [\left\lvert A \right\rvert ]} \sigma_{D_{k},B }(f(n)) 
\end{equation}
or
\[
    \sum _{n \geq  \frac{1}{2} \cdot \left\lvert A \right\rvert } \sigma_{D_{k} ,B} (f(n))\geq \frac{1}{2} \cdot  \sum _{n \in [\left\lvert A \right\rvert ]} \sigma_{D_{k},B }(f(n)) 
\]
must hold.
We assume (\ref{eq:convex-wlog}). If it is the latter, we could argue the following claim
in exactly the same way, using (\ref{eq:n-sum-split}) as opposed to (\ref{eq:n-difference-split}).

Since, for each \(n \leq \frac{1}{2} \cdot \left\lvert A \right\rvert \) there are \(\gg \left\lvert A \right\rvert \) representation
of \(n\) as \(m_1-m_2\), we have
\[
    \sum _{m_1,m_2 \in [\left\lvert A \right\rvert ]} \sigma_{D_{k} ,B} (f(m_1-m_2)) \gg \left\lvert A \right\rvert  \sum _{n \leq \frac{1}{2} \cdot \left\lvert A \right\rvert } \sigma_{D_{k} ,B} (f(n))
.\]
Substituting using (\ref{eq:convex-wlog}),
\[
    \sum _{m_1,m_2 \in [\left\lvert A \right\rvert ]} \sigma_{D_{k} ,B} (f(m_1-m_2)) \gg \left\lvert A \right\rvert \sum _{n \in [\left\lvert A \right\rvert ]} \sigma_{D_{k} ,B} (f(n))   
.\]

We observe that
\[
    \sum _{ m_1,m_2 \in [\left\lvert A \right\rvert ]} \sigma_{D_{k} ,B} \left( f(m_1-m_2) \right)
\]
counts solutions to
\begin{equation} \label{eq:convex-line-translations}
    f(m_1 - m_2) - d = b : m_1,m_2 \in [\left\lvert A \right\rvert ],~ d \in D_{k} ,~ b \in B.
\end{equation}
Solutions to (\ref{eq:convex-line-translations}) are precisely incidences of the point set \([\left\lvert A \right\rvert ]\times B \) with the set
of curves given by
\[
    \ell (x) = f(x - n) - d ,~ n \in [\left\lvert A \right\rvert ] ,~ d \in D_{k} 
,\]
which are translations of the curve given by \(f\). It follows by Lemma \ref{lem:solymosi-szt}
that
\[
    \sum _{ m_1,m_2 \in [\left\lvert A \right\rvert ]} \sigma_{D_{k} ,B} \left( f(m_1-m_2) \right) \ll \left( \left\lvert A \right\rvert ^{2} \left\lvert D_{k}  \right\rvert \left\lvert B \right\rvert  \right) ^{\frac{2}{3} } + \left\lvert A \right\rvert \left\lvert D_{k}  \right\rvert 
.\]

The trivial bound \(\left\lvert D_{k}  \right\rvert \leq \left\lvert A \right\rvert \left\lvert B \right\rvert \) gives
\[
    \left\lvert A \right\rvert \left\lvert D_{k}  \right\rvert \ll \left( \left\lvert A \right\rvert ^{2}\left\lvert D_{k}  \right\rvert \left\lvert B \right\rvert  \right) ^{\frac{2}{3} }
,\]
so combining the previous results
\[
    k \left\lvert D_{k}  \right\rvert \ll \frac{1}{\left\lvert A \right\rvert } \cdot \left( \left\lvert A \right\rvert ^{2}\left\lvert D_{k}  \right\rvert \left\lvert B \right\rvert  \right) ^{\frac{2}{3} }
\]
or equivalently
\[
    k ^{3} \left\lvert D_{k}  \right\rvert \ll \left\lvert A \right\rvert \left\lvert B \right\rvert ^{2}
.\]
Recalling that \(k^{3} \left\lvert D_{k}  \right\rvert \approx E_3(A,B)\), the proof of (1) is complete.

We proceed with the proof of (2). Fix \(s \in (1,3)\). Interpolation using \holder's inequality gives
\[
    \sum _{x} \delta_{A,B} (x)^{s} = \sum _{x} \delta_{A,B} (x) ^{3 \cdot \frac{s-1}{2}  } \delta_{A,B} (x) ^{\frac{3- s }{2} } \leq \left( \sum _{x} \delta_{A,B} (x)^{3}  \right) ^{\frac{s-1}{2} } \left( \sum _{x} \delta_{A,B} (x) \right) ^{\frac{3-s}{2} }
.\]
Using the trivial result \( \sum _{x} \delta_{A,B} (x) = \left\lvert A \right\rvert \left\lvert B \right\rvert \) we have
\[
    E_{s} (A,B) \lesssim \left( \left\lvert A \right\rvert \left\lvert B \right\rvert ^{2} \right) ^{\frac{s-1}{2} } \left( \left\lvert A \right\rvert \left\lvert B \right\rvert  \right) ^{\frac{3 - s}{2} } = \left\lvert A \right\rvert \left\lvert B \right\rvert ^{\frac{s+1}{2} }
.\]
\end{proof}

\begin{notation}
For the rest of this section, for any set \(A \subset \mathbb{R} \), we let
\[
    A_{\lambda} := A \cap \left( \frac{A}{\lambda}  \right)
.\]
We do not use this notation after this section, and instead reserve subscripts for
enumeration of sets.
\end{notation}

For the general energy bound, we follow the framework of Rudnev-Stevens.
In particular we use the following result found in \cite{rudnev-stevens} (Proposition 1).

\begin{prop}\label{prop:rs-result}
Let \(A \subset \mathbb{R}^{+} \). Let \(\tau \in \mathbb{R} \) be so that, defining
\[
    S = \left\{ \lambda \in \frac{A}{A}  : r_{\frac{A}{A} } (\lambda) \in [\tau,2\tau) \right\} 
,\]
we have
\[
    E^{\times }(A) \approx \tau^{2} \left\lvert S  \right\rvert 
.\]

There exists \(S ' \subset S \) with \(\left\lvert S ' \right\rvert \geq \frac{1}{64} \cdot \left\lvert S \right\rvert \) such that,
for all \(\lambda \in S'\),
\[
    \left\lvert A A_{\lambda}  \right\rvert \gtrsim \frac{\left\lvert A \right\rvert ^{18}}{\left\lvert S  \right\rvert ^{\frac{1}{2} } \left\lvert AA \right\rvert ^{4} \left\lvert A + A \right\rvert ^{8}}
.\]
\end{prop}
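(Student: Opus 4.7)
The plan is to reduce the lower bound on $\left\lvert AA_{\lambda} \right\rvert$ to an upper bound on the multiplicative energy $E^{\times}(A, A_{\lambda})$, and then to control the latter using a \szemeredi-Trotter incidence estimate in the style of Elekes.

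First, by a standard multiplicative Cauchy-Schwarz inequality, for each $\lambda \in S$ we have
\[
\left( \left\lvert A \right\rvert \cdot \left\lvert A_{\lambda} \right\rvert \right)^{2} \leq \left\lvert AA_{\lambda} \right\rvert \cdot E^{\times}(A, A_{\lambda}).
\]
Using $\left\lvert A_{\lambda} \right\rvert = r_{\frac{A}{A}}(\lambda) \asymp \tau$ for $\lambda \in S$, this gives $\left\lvert AA_{\lambda} \right\rvert \gg \tau^{2} \left\lvert A \right\rvert^{2} / E^{\times}(A, A_{\lambda})$, reducing the task to an upper bound on $E^{\times}(A, A_{\lambda})$ for most $\lambda \in S$.

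Next, I would control $\sum_{\lambda \in S} E^{\times}(A, A_{\lambda})$ by re-interpreting the quadruples counted by each summand as incidences. The key observation is that $b \in A_{\lambda}$ forces both $b, \lambda b \in A$, so a quadruple $(a_{1}, a_{2}, b_{1}, b_{2}) \in A^{2} \times A_{\lambda}^{2}$ with $a_{1} b_{2} = a_{2} b_{1}$ carries the extra structure $\lambda b_{1}, \lambda b_{2} \in A$. After summing over $\lambda \in S$, this count becomes an incidence problem between a point set built from $A \times A$ and a family of lines through the origin with slopes in $S$, which can be bounded via Lemma \ref{lem:solymosi-szt}. To extract sumset information, one applies Pl{\"u}nnecke-Ruzsa type inequalities (possibly after passing to a suitable high-energy substructure of $A$), which introduces the factors of $\left\lvert A+A \right\rvert$ and $\left\lvert AA \right\rvert$ in the final bound. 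A dyadic pigeonholing on the upper bound then yields a subset $S' \subset S$ with $\left\lvert S' \right\rvert \geq \left\lvert S \right\rvert/64$ on which $E^{\times}(A, A_{\lambda})$ is within a constant factor of its average, and substituting back into the Cauchy-Schwarz inequality from the first step gives the advertised lower bound on $\left\lvert AA_{\lambda} \right\rvert$.

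The main obstacle is the second step: engineering the incidence bound so as to recover the precise exponents $18$, $4$, and $8$. The heavy weight $\left\lvert A+A \right\rvert^{8}$ in the denominator strongly suggests that multiple iterations of Pl{\"u}nnecke-Ruzsa are required (rather than a single-step bound on $\left\lvert A+A \right\rvert$), applied to carefully chosen subsets that preserve the identity $E^{\times}(A) \approx \tau^{2} \left\lvert S \right\rvert$. Getting the bookkeeping of additive and multiplicative data to balance correctly, while losing only the constant factor $64$ (and not a further logarithmic loss) in passing from $S$ to $S'$, is the delicate part of the argument.
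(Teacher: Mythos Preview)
The paper does not prove this proposition at all; immediately after stating it, the authors write ``We do not provide a proof of this result'' and cite it as Proposition~1 of Rudnev--Stevens \cite{rudnev-stevens}. So there is no in-paper proof to compare your sketch against.

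That said, your outline is only partially on track relative to the actual Rudnev--Stevens argument. Your first step (multiplicative Cauchy--Schwarz reducing a lower bound on $\lvert AA_\lambda\rvert$ to an upper bound on $E^\times(A,A_\lambda)$, using $\lvert A_\lambda\rvert\asymp\tau$) is correct and is indeed how their argument begins. Your second step, however, is where the sketch becomes unreliable. The exponents $18,4,8$ do not arise from Pl\"unnecke--Ruzsa iterations; rather, Rudnev--Stevens bound $\sum_{\lambda\in S}E^\times(A,A_\lambda)$ by an iterated Solymosi-type incidence argument. One views points of $A_\lambda\times A_\lambda$ as lying on a line of slope $\lambda$ through the origin inside $A\times A$, groups consecutive such lines, and observes that the resulting vector sums land in $(A+A)\times(A+A)$; applying \szemeredi--Trotter to the point set $(A+A)\times(A+A)$ (or further iterates thereof) against suitable line families is what produces the powers of $\lvert A+A\rvert$ and $\lvert AA\rvert$. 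The factor $\lvert S\rvert^{1/2}$ comes from a Cauchy--Schwarz step inside this argument, not from the final pigeonholing. So your instinct that \szemeredi--Trotter and the Solymosi line-through-origin picture are central is right, but invoking Pl\"unnecke--Ruzsa is a red herring, and as you yourself note, your sketch does not explain how the specific exponents emerge.
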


We do not provide a proof of this result. Combining Lemma \ref{lem:solymosi-szt} and Proposition \ref{prop:rs-result} to obtain a result of the following type was
done by Rudnev-Stevens in \cite{rudnev-stevens}. This was later refined by Bloom in \cite{bloom} (Lemma 7).
What follows is not original work, but a restatement of the improvement of \cite{bloom}.

\begin{lemma}\label{lem:SP-szt}
Let \(A \subset \mathbb{R} ^{+}\). There is \(A_0 \subset A\) with \(\left\lvert A_0 \right\rvert > \frac{1}{2} \cdot \left\lvert A \right\rvert \) for which

(1) For all \(B \subset \mathbb{R} \), 
\[
    E_3(A_0,B) \lesssim \frac{\left\lvert B \right\rvert ^{2} \left\lvert AA \right\rvert ^{\frac{35}{2} } \left\lvert A + A \right\rvert ^{24}}{\left\lvert A \right\rvert ^{54}} 
\]

(2) For all \(B \subset \mathbb{R} \), and \(s \in (1,3)\),
\[
    E_{s} (A_0,B) \lesssim \left\lvert B \right\rvert ^{\frac{1 + s}{2} } \left\lvert A \right\rvert ^{\frac{1}{2} \cdot \left( 57 - 55s \right) } \left\lvert AA \right\rvert ^{\frac{35}{4} (s-1)} \left\lvert A+A \right\rvert ^{12 \left( s-1 \right) }
.\]
\end{lemma}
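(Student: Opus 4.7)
The plan is to replicate the Rudnev--Stevens framework as refined by Bloom (Lemma 7 of \cite{bloom}), combining the multiplicative richness given by Proposition \ref{prop:rs-result} with the Szemer\'edi--Trotter-type bound of Lemma \ref{lem:solymosi-szt}.

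For part (1), I would first apply Proposition \ref{prop:rs-result} to extract a large collection $S' \subset A/A$ of popular ratios, each with common dyadic multiplicity $\tau \asymp |A_\lambda|$ and satisfying the crucial lower bound $|AA_\lambda| \gtrsim |A|^{18}/(|S|^{1/2}|AA|^4|A+A|^8)$. A Markov-type averaging applied to $\sum_{\lambda \in S'}|A_\lambda| \asymp |S'|\tau$ then produces the subset $A_0 \subset A$ with $|A_0| > |A|/2$, whose elements each lie in $\gtrsim |S'|\tau/|A|$ of the sets $A_\lambda$. I would then dyadically decompose $E_3(A_0, B) \approx k^3|D_k|$ with $D_k = \{d : \delta_{A_0,B}(d) \asymp k\}$, reducing the problem to upper-bounding $k^3|D_k|$.

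The core step is an incidence configuration, in the spirit of Solymosi's sum-product argument: for each triple $(\lambda, a, b)$ with $a \in A_\lambda \cap A_0$, $b \in B$, $a - b = d \in D_k$, one records the incidence $(\lambda a, b) \in \ell_{\lambda, d}$, where $\ell_{\lambda, d}\colon y = x/\lambda - d$ has slope $1/\lambda \neq 0$. This fits the hypothesis of Lemma \ref{lem:solymosi-szt}(a) via a Cartesian product point set built from $AA_\lambda$ and $B$. The richness of $A_0$ yields a lower bound on the incidence count, while Lemma \ref{lem:solymosi-szt}(a) provides the matching upper bound that leverages the $|AA_\lambda|$ estimate. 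Carefully balancing these, together with $\tau^2|S| \asymp E^\times(A)$ and Pl\"unnecke-type bounds relating $|A/A|$, $|AA|$, and $|A|$, then yields the exponents $35/2$, $24$, and $54$. Part (2) follows from (1) by a standard H\"older interpolation between $E_3(A_0, B)$ and $E_1(A_0, B) = |A_0||B|$, exactly as in the proof of Lemma \ref{lem:convex-szt}(2): writing $\delta_{A_0,B}(x)^s = \delta_{A_0,B}(x)^{3(s-1)/2}\delta_{A_0,B}(x)^{(3-s)/2}$ and applying H\"older with exponents $2/(s-1)$ and $2/(3-s)$ gives $E_s(A_0,B) \lesssim E_3(A_0,B)^{(s-1)/2}(|A_0||B|)^{(3-s)/2}$, into which the bound from (1) substitutes to yield the claim.

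The main obstacle is the exponent bookkeeping in part (1): identifying the precise Cartesian product and line family so that Lemma \ref{lem:solymosi-szt}(a), combined with the richness lower bound and the $|AA_\lambda|$ estimate, produces exactly the exponents $35/2$, $24$, and $54$ when expressed in terms of $|A|$, $|AA|$, $|A+A|$. This delicate optimization over the implicit parameters $\tau$ and $|S|$ is the technical heart of the Rudnev--Stevens and Bloom arguments; I would follow their computation directly.
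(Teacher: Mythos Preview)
Your outline captures the right ingredients but has a genuine gap in the construction of $A_0$. The claim that ``a Markov-type averaging applied to $\sum_{\lambda \in S'}|A_\lambda|$ produces $A_0 \subset A$ with $|A_0| > |A|/2$ whose elements each lie in $\gtrsim |S'|\tau/|A|$ of the sets $A_\lambda$'' is false as stated: Markov's inequality bounds the \emph{upper} tail, not the lower. Nothing prevents the mass $\sum_\lambda |A_\lambda|$ from concentrating on a tiny subset of $A$, in which case the set of ``rich'' elements you describe could have size $o(|A|)$. This is not a cosmetic issue; obtaining a subset of density $>1/2$ with uniform richness is precisely the difficulty that Bloom's refinement addresses.

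The paper handles this by an \emph{iterative extraction with Minkowski}. One does not build $A_0$ in a single averaging step. Instead, given $X \subset A$ with $|X|>|A|/2$, one applies Proposition~\ref{prop:rs-result} to $X$, then pigeonholes over $\sum_{\lambda\in S'} r_{X/X}(\lambda)$ to find a single $x_0\in X$ and a nonempty set $X' = (X/x_0)\cap S'$. A second point you miss is the identity $r_{\Pi/\Pi}(\lambda) \ge |AA_\lambda|$ (where $\Pi=AA$), which converts the $|AA_\lambda|$ lower bound into richness of each $x\in X'$ as a ratio in $\Pi/\Pi$; the incidence configuration is then the point set $\Pi\times B$ against lines $y = x/\pi - d$ with $\pi\in\Pi$, $d\in D^{(k)}$, yielding $E_3(X',B)\lesssim |X'|^3 |B|^2|\Pi|^{35/2}|A+A|^{24}/|A|^{57}$. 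One then sets $A_1=A$, extracts $A_1'$, removes it to form $A_2=A_1\setminus A_1'$, and repeats until $|A_n|\le|A|/2$; finally $A_0=\bigsqcup_j A_j'$ and Minkowski on $\|\delta_{A_0,B}\|_3 \le \sum_j \|\delta_{A_j',B}\|_3$ together with $\sum_j|A_j'|\le|A|$ produces the exponent $54$. No Pl\"unnecke-type inequality is used; the only input beyond Proposition~\ref{prop:rs-result} and Lemma~\ref{lem:solymosi-szt} is $E^\times(X)\ge |X|^4/|XX|$ from Cauchy--Schwarz. Your treatment of part~(2) by H\"older interpolation is correct and matches the paper.
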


\begin{proof}
We first prove (1), from which (2) immediately follows by interpolation for \(E_{s} (A,B)\)
between \(E_3(A,B)\) and \(E_{1} (A,B) = \left\lvert A \right\rvert \left\lvert B \right\rvert \).

For the sake of clarity, let \(\Pi = AA\).

Suppose \(X \subset A\) with \(\left\lvert X \right\rvert  > \frac{1}{2} \cdot \left\lvert A \right\rvert \).
We apply Proposition \ref{prop:rs-result} to \(X\) instead of \(A\).
We show that, given \(X\) as above, Proposition \ref{prop:rs-result} gives rise to a
nonempty set \(X'\), a subset of some dilate of \(X\), for which
\(r_{\Pi / \Pi} (\lambda)\) is large for \(\lambda \in X'\). 

From this, we obtain
a bound on \(E_3(X',B)\). We then iterate this
general result, beginning with \(A\), obtaining a sequence of sets \(\left\{ A_{i} ' \right\} \),
and a bound on each of \(E_3(A_{i} ',B)\). The set \(\cup _{i} A_{i}' \) will be our \(A_0\).

A dyadic partitioning gives \(\beta \in \mathbb{R} \) so that, defining
\[
    S  : = \left\{ \lambda \in \frac{X}{X} : r_{\frac{X}{X} } (\lambda) \in [\beta,2\beta) \right\} 
\]
we have
\[
    E^{\times }(X) \approx \beta ^{2}\left\lvert S  \right\rvert 
.\]
Proposition \ref{prop:rs-result} applied to \(X\) yields \(S ' \subset S \) with \(\left\lvert S ' \right\rvert \geq \frac{1}{64} \cdot \left\lvert S  \right\rvert \) for which
for any \(\lambda \in S'\),
\begin{equation} \label{eq:xxlambda-bound}
    \left\lvert X X_{\lambda}  \right\rvert \gtrsim \frac{\left\lvert X \right\rvert ^{18}}{\left\lvert S \right\rvert ^{\frac{1}{2} } \left\lvert XX \right\rvert ^{4} \left\lvert X + X \right\rvert ^{8}}.
\end{equation}  
Since \(X \subset A\), \(\left\lvert A A_{\lambda}  \right\rvert \geq \left\lvert X X_{\lambda}  \right\rvert \). Additionally, \(\left\lvert X \right\rvert \gg \left\lvert A \right\rvert \)
and \(\left\lvert X + X \right\rvert \leq \left\lvert A+A \right\rvert \), \(\left\lvert XX \right\rvert \leq \left\lvert \Pi \right\rvert \).
Substituting all of these into (\ref{eq:xxlambda-bound}) we have that for any \(\lambda \in S'\),
\[
    \left\lvert A A_{\lambda}  \right\rvert \gtrsim  \frac{\left\lvert A \right\rvert ^{18}}{\left\lvert S \right\rvert ^{\frac{1}{2} } \left\lvert \Pi \right\rvert ^{4} \left\lvert A + A \right\rvert ^{8}} 
.\]

Observe the following truism. For \(\lambda \in S',~ a \in A,~ a_{\lambda} \in A_{\lambda} \). We have
\[
    \lambda = \frac{a \left( \lambda a_{\lambda}  \right) }{a a_{\lambda} } \in \frac{\Pi}{\Pi} 
.\]
As such, each distinct \(d \in A A_{\lambda} \) gives rise to the representation \((\lambda d, d) \in \Pi ^{2}\), and hence, \(\forall \lambda \in S '\),
\[
    r_{ \Pi / \Pi  } (\lambda) \geq \left\lvert A A_{\lambda}  \right\rvert \gtrsim \frac{\left\lvert A \right\rvert ^{18}}{\left\lvert S \right\rvert ^{\frac{1}{2} } \left\lvert \Pi \right\rvert ^{4} \left\lvert A + A \right\rvert ^{8}} 
.\]

We pass from elements in \(S'\) to elements in some dilate of \(X\) by pigeonholing.
Using the definition of \(S'\) and the definition of \(r_{X / X} \) respectively, we have
\[
    \beta \left\lvert S' \right\rvert \leq  \sum _{\lambda \in S'} r_{ X / X } (\lambda) = \sum _{\lambda \in S'} \sum _{x_0 \in X} \# \left\{ x \in X : \frac{x}{x_0} = \lambda \right\}   = \sum _{x_0 \in X} \# \left\{ x \in X : \frac{x}{x_0} \in S ' \right\}  
,\]
and hence there is \(x_0 \in X\) such that
\[
    \# \left\{ x \in X : \frac{x}{x_0} \in S'  \right\} \geq \frac{\beta \left\lvert S  \right\rvert }{64 \left\lvert X \right\rvert } 
.\]
Letting
\[
    X' = \left( \frac{X}{x_0}  \right) \cap S'
,\]
we see that \(\left\lvert X' \right\rvert \gg \frac{\beta \left\lvert S  \right\rvert }{\left\lvert X \right\rvert } \), and
for any \(x \in X'\)
\begin{equation} \label{eq:xprime-rep}
    r_{ \Pi / \Pi } \left( x  \right)  \gtrsim \frac{\left\lvert A \right\rvert ^{18}}{\left\lvert S \right\rvert ^{\frac{1}{2} } \left\lvert \Pi \right\rvert ^{4} \left\lvert A + A \right\rvert ^{8}}.
\end{equation}  

We have
\[
    \left\lvert X' \right\rvert \gg \frac{\beta \left\lvert S \right\rvert }{\left\lvert X \right\rvert }  \implies \beta \left\lvert S \right\rvert \ll \left\lvert X \right\rvert \left\lvert X' \right\rvert \leq \left\lvert A \right\rvert \left\lvert X' \right\rvert 
,\]
and therefore, substituting into the RHS of (\ref{eq:xprime-rep}),
\[
    \frac{\left\lvert A \right\rvert ^{18}}{\left\lvert S \right\rvert ^{\frac{1}{2} } \left\lvert \Pi \right\rvert ^{4} \left\lvert A + A \right\rvert ^{8}} \gg \frac{\left\lvert A \right\rvert ^{17} \beta \left\lvert S \right\rvert ^{\frac{1}{2} }}{\left\lvert X' \right\rvert \left\lvert \Pi \right\rvert ^{4} \left\lvert A + A \right\rvert ^{8}}
.\]
By the definition of \(\beta , S\), Cauchy-Schwarz, and \(\left\lvert X \right\rvert \gg \left\lvert A \right\rvert \), \(\left\lvert X X \right\rvert \leq \left\lvert \Pi \right\rvert \) respectively, we have
\[
    \beta ^{2} \left\lvert S \right\rvert \approx E^{\times }(X) \geq \frac{\left\lvert X \right\rvert ^{4}}{\left\lvert XX \right\rvert }  \gg \frac{\left\lvert A \right\rvert ^{4}}{\left\lvert \Pi \right\rvert } 
,\]
and hence
\[
    \frac{\left\lvert A \right\rvert ^{17} \beta \left\lvert S \right\rvert ^{\frac{1}{2} }}{\left\lvert X' \right\rvert \left\lvert \Pi \right\rvert ^{4} \left\lvert A + A \right\rvert ^{8}} \gg \frac{\left\lvert A \right\rvert ^{17} \left( \frac{\left\lvert A \right\rvert ^{4}}{\left\lvert \Pi \right\rvert }  \right) ^{\frac{1}{2} }}{\left\lvert X' \right\rvert \left\lvert \Pi \right\rvert ^{4} \left\lvert A + A \right\rvert ^{8}} = \frac{1}{\left\lvert X' \right\rvert } \cdot \frac{\left\lvert A \right\rvert ^{19}}{\left\lvert \Pi \right\rvert ^{\frac{9}{2} }\left\lvert A+A \right\rvert ^{8}}  
.\]

Substituting into (\ref{eq:xprime-rep}) gives that for any \(x \in X'\) we have
\begin{equation} \label{eq:xprime-rep-final}
    r_{ \Pi / \Pi } \left( x  \right)  \gtrsim \frac{1}{\left\lvert X' \right\rvert } \cdot \frac{\left\lvert A \right\rvert ^{19}}{\left\lvert \Pi \right\rvert ^{\frac{9}{2} } \left\lvert A + A \right\rvert ^{8}} .
\end{equation}

We see that \(\left\lvert X' \right\rvert \geq 1\), or else
\[
    \beta \left\lvert S \right\rvert \ll \left\lvert X \right\rvert \leq \left\lvert A \right\rvert 
,\]
and hence, dividing from
\[
    \beta^{2} \left\lvert S \right\rvert \approx E^{\times }(X) \geq \frac{\left\lvert X \right\rvert ^{4}}{\left\lvert XX \right\rvert } \gg \frac{\left\lvert A \right\rvert ^{4}}{\left\lvert \Pi \right\rvert } 
,\]
we see that
\[
    \beta \gtrsim \frac{\left\lvert A \right\rvert ^{3} }{\left\lvert \Pi \right\rvert } 
.\]
Using the trivial bound \(\beta \leq \left\lvert A \right\rvert \) gives
\(\left\lvert \Pi \right\rvert \gtrsim \left\lvert A \right\rvert ^{2}\), and hence Lemma \ref{lem:SP-szt} part 1 holds trivially
upon taking \(A_0 = A\).

We proceed by obtaining an energy bound for \(X'\).
Fix an arbitrary \(B \subset \mathbb{R} \) finite.

By a dyadic partitioning, there is \(k \in \mathbb{N} \) and
\[
    D^{(k)} = \left\{ d \in X'  - B  : \delta_{X',B} (d) \in [k,2k) \right\}
\]
such that
\[
    E_{3} (X',B)\approx k ^{3} \left\lvert D^{(k)} \right\rvert 
.\]

By the definition of \(D^{(k)}\),
\[
    \sum _{d \in D^{(k)}} \delta_{X',B} (d) \geq k \left\lvert D^{(k)} \right\rvert 
.\]
We have
\[
    \sum _{d \in D^{(k)}} \delta_{X',B} (d) = \sum _{x \in X'} \sigma_{B,D^{(k)}} (x)
,\]
and hence, by (\ref{eq:xprime-rep-final}), we have
\begin{equation} \label{eq:lem-lower-bound}
    \sum _{x \in X'} \sigma_{B,D^{(k)}} (x) r_{ \Pi / \Pi} (x) \gtrsim k \left\lvert D^{(k)} \right\rvert \cdot \frac{1}{\left\lvert X' \right\rvert } \cdot \frac{\left\lvert A \right\rvert ^{19}}{\left\lvert \Pi \right\rvert ^{\frac{9}{2} } \left\lvert A + A \right\rvert ^{8}}  .
\end{equation}  

Trivially we have
\[
    \sum _{x \in X'} \sigma_{B,D^{(k)}} (x) r_{ \Pi / \Pi} (x) \leq \sum _{x} \sigma_{B,D^{(k)}} (x) r_{ \Pi / \Pi} (x)
.\]
See that
\[
    \sum _{x} \sigma_{B,D^{(k)}} (x) r_{ \Pi / \Pi} (x)
\]
counts solutions to
\begin{equation} \label{eq:point-line-system}
    \frac{1}{\pi_2} \cdot \pi_1 - d = b,~ \pi_{i} \in \Pi,~ b \in B,~ d \in D^{(k)}.
\end{equation}
Solutions to (\ref{eq:point-line-system}) are precisely the incidences of the point set
\(\Pi \times B\) with the system of lines
\[
    \ell(x) = \frac{x}{\pi} - d ,~ \pi \in \Pi ,~ d \in D^{(k)}
.\]
Since \(\Pi \subset \mathbb{R} _{>0} \), the slopes of
these lines are finite nonzero real numbers, so applying Lemma \ref{lem:solymosi-szt} gives
\[
    \sum _{x} \sigma_{B,D^{(k)}} (x) r_{ \Pi / \Pi} (x) \ll \left( \left\lvert \Pi \right\rvert ^{2} \left\lvert B \right\rvert \left\lvert D^{(k)} \right\rvert  \right) ^{\frac{2}{3} } + \left\lvert \Pi \right\rvert \left\lvert D^{(k)} \right\rvert 
.\]
Using the trivial bound \(\left\lvert D^{(k)} \right\rvert \leq  \left\lvert A \right\rvert \left\lvert B \right\rvert \) we have
\[
    \left\lvert \Pi \right\rvert \left\lvert D^{(k)} \right\rvert \ll \left( \left\lvert \Pi \right\rvert ^{2} \left\lvert B \right\rvert \left\lvert D^{(k)} \right\rvert  \right) ^{\frac{2}{3} }
,\]
so combining with (\ref{eq:lem-lower-bound}) gives
\[
    k \left\lvert D^{(k)} \right\rvert \cdot \frac{1}{\left\lvert X' \right\rvert } \cdot \frac{\left\lvert A \right\rvert ^{19}}{\left\lvert \Pi \right\rvert ^{\frac{9}{2} } \left\lvert A + A \right\rvert ^{8}} \lesssim \sum _{x} \sigma_{B,D^{(k)}} (x) r_{ \Pi / \Pi} (x) \ll \left( \left\lvert \Pi \right\rvert ^{2} \left\lvert B \right\rvert \left\lvert D^{(k)} \right\rvert  \right) ^{\frac{2}{3} }
,\]
or equivalently
\[
    k^{3}  \left\lvert D^{(k)} \right\rvert \lesssim \left\lvert X' \right\rvert ^{3} \cdot \frac{\left\lvert B \right\rvert ^{2} \left\lvert \Pi \right\rvert ^{\frac{35}{2} }\left\lvert A+A \right\rvert ^{24}}{\left\lvert A \right\rvert ^{57}} 
.\]
Recall that \(k ^{3} \left\lvert D^{(k)} \right\rvert \approx E_3(X',B)\).

Having found the desired bound on \(E_3(X',B)\) we proceed with iteration.
Let \(A_1 = A\). 
If \(\left\lvert A_{j} \right\rvert  > \frac{1}{2} \cdot \left\lvert A \right\rvert  \), obtain \(A_{j+1} \) from \(A_{j} \) by applying the above argument
with \(X = A_{j} \), yielding some \(a_{j}  \in A\) and a subset \(A_{j} ' \subset A_{j} \) for which we have
\[
    \forall a \in A_{j} ' ,~ r_{ \Pi / \Pi } \left( \frac{a}{a_{j} }   \right)  \gtrsim \frac{1}{\left\lvert A_{j} ' \right\rvert } \cdot \frac{\left\lvert A \right\rvert ^{19}}{\left\lvert \Pi \right\rvert ^{\frac{9}{2} } \left\lvert A + A \right\rvert ^{8}} 
,\]
and hence for any \(B \subset \mathbb{R} \),
\[
    E_3(A_{j} ',B) = E_3 \left( \frac{A_{j} '}{a_{j} } , \frac{B}{a_{j} }    \right) \lesssim \left\lvert A_{j} ' \right\rvert ^{3} \cdot \frac{\left\lvert B \right\rvert ^{2} \left\lvert \Pi \right\rvert ^{\frac{35}{2} }\left\lvert A+A \right\rvert ^{24}}{\left\lvert A \right\rvert ^{57}} 
.\]
Let \(A_{j+1} = A_{j} \setminus A_{j} ' \). Once \(\left\lvert A_{n}  \right\rvert \leq \frac{1}{2} \cdot \left\lvert A \right\rvert     \), which will occur in a finite number of steps since \(\left\lvert A_{j} '  \right\rvert \geq 1\),
let
\[
    A_0 = \bigsqcup  _{j=1} ^{n-1}A_{j} ' = A \setminus A_{n} 
.\]
We have \(\left\lvert A_0 \right\rvert > \frac{1}{2} \cdot \left\lvert A \right\rvert \) and, by Minkowski's inequality, for any \(B \subset \mathbb{R} \),
\[
    E_3(A_0,B)^{\frac{1}{3} } = \left\lVert \delta_{A_0,B}  \right\rVert _{3} = \left\lVert \sum _{j =1} ^{n-1} \delta_{A_{j}' ,B}  \right\rVert _{3} \leq \sum _{j=1} ^{n-1} \left\lVert \delta_{A_{j}' ,B}  \right\rVert _{3} \lesssim \frac{\left\lvert B \right\rvert ^{\frac{2}{3} } \left\lvert \Pi \right\rvert ^{\frac{35}{6}  } \left\lvert A+ A \right\rvert ^{8}}{\left\lvert A \right\rvert ^{19}} \sum _{j=1} ^{n-1} \left\lvert A_{j} ' \right\rvert 
.\]
Seeing that \( \sum _{j=1} ^{n-1}\left\lvert A_{j} ' \right\rvert \leq \left\lvert A \right\rvert   \) gives
\[
    E_3(A_0,B)\lesssim \frac{\left\lvert B \right\rvert^{2}  \left\lvert \Pi \right\rvert ^{\frac{35}{2} } \left\lvert A+A \right\rvert ^{24}}{\left\lvert A \right\rvert ^{54}} 
\]
as desired. Part (1) is complete.

For part (2), fix \(s \in (1,3)\) and use \holder's inequality to get
\[
    \sum _{x} \delta_{A_0,B} (x)^{s} = \sum _{x} \delta_{A_0,B} (x) ^{3 \cdot \frac{s-1}{2}  } \delta_{A_0,B} (x) ^{\frac{3- s }{2} } \leq \left( \sum _{x} \delta_{A_0,B} (x)^{3}  \right) ^{\frac{s-1}{2} } \left( \sum _{x} \delta_{A_0,B} (x) \right) ^{\frac{3-s}{2} }
.\]
Seeing that, by part (1) and the trivial result \( \sum _{x} \delta_{A_0,B} (x) = \left\lvert A_0 \right\rvert \left\lvert B \right\rvert \),
we have
\[
    \left( \sum _{x} \delta_{A_0,B} (x)^{3}  \right) ^{\frac{s-1}{2} } \left( \sum _{x} \delta_{A_0,B} (x) \right) ^{\frac{3-s}{2} } \lesssim \left( \frac{\left\lvert B \right\rvert^{2}  \left\lvert \Pi \right\rvert ^{\frac{35}{2} } \left\lvert A+A \right\rvert ^{24}}{\left\lvert A \right\rvert ^{54}}  \right) ^{\frac{s-1}{2} }\left( \left\lvert A_0 \right\rvert \left\lvert B \right\rvert  \right) ^{\frac{3-s}{2} }
,\]
and so
\[
    E_{s} (A_0,B) \lesssim \left\lvert B \right\rvert ^{\frac{1 + s}{2} } \left\lvert A \right\rvert ^{\frac{1}{2} \cdot \left( 57 - 55s \right) } \left\lvert \Pi \right\rvert ^{\frac{35}{4} (s-1)} \left\lvert A+A \right\rvert ^{12 \left( s-1 \right) }
.\]

\end{proof}

\begin{remark}
We now stop using the notation \(A_{\lambda} = A \cap \left(  \frac{A}{\lambda}  \right) \), and instead
reserve subscripts for enumeration of sets.
\end{remark}

\bigskip

\section{Proof of Theorem \ref{thm-SP}}\label{section:SP}

Let \(A \subset \mathbb{R} \) be a sufficiently large finite set (in the sense of Lemma \ref{lem:sum-proj}).

We have WLOG \(A \subset \mathbb{R} ^{+}\). If not, we break \(A\setminus \left\{ 0 \right\} \) into positive and negative parts
as \(A \setminus \left\{ 0 \right\} = A^{+} \sqcup A^{-} \) where \(A^{+} , -A^{-} \subset \mathbb{R} ^{+}\). Either \(\left\lvert A^{+} \right\rvert  > \frac{1}{3} \cdot  \left\lvert A \right\rvert \) or \(\left\lvert A ^{-}\right\rvert  > \frac{1}{3} \cdot \left\lvert A \right\rvert  \).
Suppose it is the first.
We have
\[
    \max \left( \left\lvert A+A \right\rvert , \left\lvert AA \right\rvert  \right) \geq \max \left( \left\lvert A^{+ } + A^{+ } \right\rvert , \left\lvert A^{+ }A^{+ } \right\rvert  \right) 
,\]
so we may apply the following argument with \(A^{+}\) in place of \(A\) and finish by using \(\left\lvert A^{+} \right\rvert \gg \left\lvert A \right\rvert \).

From now on we assume WLOG \(A \subset \mathbb{R} ^{+}\).
Lemma \ref{lem:SP-szt} gives \(A_0 \subset A\) with \(\left\lvert A_0 \right\rvert > \frac{1}{2} \cdot \left\lvert A \right\rvert \) with some energy bounds.
Apply Lemma \ref{lem:sum-proj} to the set \(A_0\), as opposed to \(A\), to give \(B \subset A_0 \subset A\) with
\(\left\lvert B \right\rvert \geq \frac{1}{2} \cdot \left\lvert A_0 \right\rvert \), and \(\Delta\), \(P_{\Delta} \) for which
\[
    \Delta^{\frac{12}{7} } \left\lvert P_{\Delta}  \right\rvert \approx E_{\frac{12}{7} } \left( R_{A_0} (B) \right) \approx E_{\frac{12}{7} } (B) 
\]
and
\[
    \Delta ^{2} \left\lvert P_{\Delta}  \right\rvert ^{2} \left\lvert B \right\rvert ^{2} \ll E_3(B) \cdot \sum _{p \in P_{\Delta} } \delta_{P_{A_0} (B)} (p)
.\]
To ease notation, call \(P_{A_0} (B)= S\).

Firstly, see that
\[
    \sum _{p \in P_{\Delta} } \delta_{S} (p) = \sum _{x \in S} \sigma_{S, P_{\Delta} }(x) 
,\]
as they both count solutions to
\[
    s_1-s_2  = p_{\Delta} ,~ s_{i}  \in S ,~ p_{\Delta} \in P_{\Delta} 
.\]
See that, by the definition of \(S = P_{A_0} (B)\),
\[
    \frac{\left\lvert B \right\rvert ^{2} }{\left\lvert B + B \right\rvert} \sum _{x \in S} \sigma_{S, P_{\Delta} } (x) \lesssim \sum _{x \in S} \sigma_{B} (x) \sigma_{S,P_{\Delta} }(x) 
.\]
Rearranging again, see that
\[
    \sum _{x} \sigma_{B} (x) \sigma_{S , P_{\Delta} } (x) = \sum _{x} \delta_{B,P_{\Delta} } (x) \delta_{S, B} (x)
,\]
as they both count solutions to
\[
    b_1 + b_2 = s + p_{\Delta} ,~ b_{i} \in B,~ s  \in S ,~ p_{\Delta} \in P_{\Delta} 
.\]

By \holder's inequality,
\[
    \sum _{x} \delta_{B,P_{\Delta} } (x) \delta_{S,B} (x) \leq E_{\frac{3}{2} } (B,P_{\Delta} )^{\frac{2}{3} } E_{3} (S,B)^{\frac{1}{3} }
,\]
so combining the above results yields
\[
    \sum _{p \in P_{\Delta} } \delta_{S} (p) \lesssim \frac{\left\lvert B+B \right\rvert }{\left\lvert B \right\rvert ^{2}}\cdot  E_{\frac{3}{2} } (B,P_{\Delta} )^{\frac{2}{3} } E_{3} (S,  B)^{\frac{1}{3} }
.\]
Substituting into the original inequality gives
\begin{equation} \label{eq:SP-from-lemma}
    \Delta ^{2}\left\lvert P_{\Delta}  \right\rvert ^{2} \left\lvert B \right\rvert ^{2} \ll E_3(B) \cdot \frac{\left\lvert B+B \right\rvert }{\left\lvert B \right\rvert ^{2}} \cdot E_{\frac{3}{2} } (B,P_{\Delta} )^{\frac{2}{3} } E_3(S,B)^{\frac{1}{3} }.
\end{equation}

Both \(E_3(B)\) and \(E_{\frac{3}{2} } (B,P_{\Delta} )\) can be bounded easily using Lemma \ref{lem:SP-szt},
so we proceed with the final term
\(E_{3} (S,B)^{\frac{1}{3} }\). Firstly, let \(D_{i} = \left\{ x : \delta_{B}  (x) \in [2^{i},2^{i+1}) \right\} \),
and let \(\tau_{x} :\mathbb{R}  \to \mathbb{R} \) be the ``translation'' function, so
\[
    \tau_{x} (y) = y + x
.\]
Seeing that, for any \(x \in S-B\), by the definition of \(S = P_{A_0} (B)\),
\[
    \delta_{S,B} (x) \lesssim  \frac{\left\lvert B+B \right\rvert }{\left\lvert B \right\rvert ^{2}} \cdot r_{B + B - B} (x) 
,\]
and hence we have
\[
   E_{3} (S,B)^{\frac{1}{3} } = \left\lVert \delta_{S,B}  \right\rVert _{3}  \lesssim  \frac{\left\lvert B + B\right\rvert }{\left\lvert B \right\rvert ^{2}} \left\lVert r_{B + B-B} \right\rVert _{3}
.\]

Using the definitions of \(\tau\) and \(r_{B + B - B} \), we have
\[
    \left\lVert r_{B + B - B}  \right\rVert _{3}  = \left\lVert \sum _{b_1,b_2 \in B}  1_{B} \circ \tau_{b_1 -b_2}   \right\rVert _{3} 
.\]
Partitioning the sum over \(B-B\) and \(D_{i} \) respectively gives
\[
    \left\lVert \sum _{b_1,b_2 \in B}  1_{B} \circ \tau_{b_1 -b_2}   \right\rVert _{3} = \left\lVert \sum _{x \in B-B} 1_{B} \circ \tau_{x} \cdot \delta_{B} (x) \right\rVert   _{3} = \left\lVert \sum _{i \leq \log _{2} \left\lvert B \right\rvert } \sum _{x \in D_{i} } 1_{B} \circ \tau_{x} \cdot \delta_{B} (x) \right\rVert _{3} 
.\]
By Minkowski's inequality and definition of \(D_{i} \) respectively,
\begin{align*}
    \left\lVert \sum _{i \leq \log _{2} \left\lvert B \right\rvert } \sum _{x \in D_{i} } 1_{B} \circ \tau_{x} \cdot \delta_{B} (x) \right\rVert _{3} & \leq \sum _{i \leq \log _{2} \left\lvert B \right\rvert } \left\lVert \sum _{x \in D_{i} } 1_{B} \circ \tau_{x} \cdot \delta_{B} (x) \right\rVert _{3} \\
    & \leq \sum _{i \leq \log _{2} \left\lvert B \right\rvert } 2^{i+1} \left\lVert \sum _{x \in D_{i} } 1_{B} \circ \tau_{x}  \right\rVert _{3} 
\end{align*}

By the definition of \(\delta_{B,D_{i} } \),
\[
    \left[ \sum _{x \in D_{i} } 1_{B} \circ \tau_{x} \right] (y) = \delta_{B,D_{i} } (y)
,\]
and hence we have
\[
    \sum _{i \leq \log _{2} \left\lvert B \right\rvert } 2^{i+1} \left\lVert \sum _{x \in D_{i} } 1_{B} \circ \tau_{x}  \right\rVert _{3} = \sum _{i \leq \log _{2} \left\lvert B \right\rvert } 2^{i+1} \left\lVert \delta_{B,D_{i} }  \right\rVert _{3} 
.\]
Thus far, we have demonstrated that
\begin{equation} \label{eq:bloom-arg-SP-case}
\left\lVert \delta_{S,B}  \right\rVert _{3}  \lesssim \frac{\left\lvert B+B \right\rvert }{\left\lvert B \right\rvert ^{2}} \sum _{i \leq \log _{2} \left\lvert B \right\rvert } 2^{i+1} \left\lVert \delta_{B,D_{i} }  \right\rVert _{3}   ,
\end{equation}
which we record now for later use. As \(E_3(B,D_{i} )\leq E_{3} (A_0,D_{i} )\), we apply Lemma \ref{lem:SP-szt}
to get
\[
    \left\lVert \delta_{B,D_{i} }  \right\rVert _{3} \leq  \left\lVert \delta_{A_0,D_{i} }  \right\rVert _{3}  \lesssim \frac{\left\lvert D_{i}  \right\rvert ^{\frac{2}{3} } \left\lvert AA \right\rvert ^{\frac{35}{6} } \left\lvert A + A \right\rvert ^{8}}{\left\lvert A \right\rvert ^{18}} 
.\]
Combining with (\ref{eq:bloom-arg-SP-case}) gives
\[
    \left\lVert \delta_{S,B}  \right\rVert _{3}  \lesssim \frac{\left\lvert B + B \right\rvert }{\left\lvert B \right\rvert ^{2}} \cdot \frac{ \left\lvert AA \right\rvert ^{\frac{35}{6} } \left\lvert A + A \right\rvert ^{8}}{\left\lvert A \right\rvert ^{18}} \cdot \sum _{i \leq \log _{2} \left\lvert B \right\rvert } 2^{i+1}\left\lvert D_{i}  \right\rvert ^{\frac{2}{3} }
\]
Pigeonholing gives a \(j \in \mathbb{N}  \) so that
\[
    \sum _{i \leq \log _{2} \left\lvert B \right\rvert } 2^{i+1} \left\lvert D_{i}  \right\rvert ^{\frac{2}{3} } \lesssim 2^{j} \left\lvert D_{j}  \right\rvert ^{\frac{2}{3} } \leq \left(  \sum _{x \in D_{j} } \delta_{B} (x) ^{\frac{3}{2} } \right) ^{\frac{2}{3} } \leq  E_{\frac{3}{2} } (B) ^{\frac{2}{3} }
.\]

Interpolating using \holder's inequality gives
\[
    E_{\frac{3}{2} } (B)^{\frac{2}{3} } \leq \left\lvert B \right\rvert ^{\frac{2}{5} }  E_{\frac{12}{7} } (B) ^{\frac{7}{15} }
,\]
and hence
\[
    E_{3} (S,B)^{\frac{1}{3} }\lesssim \frac{\left\lvert B+B \right\rvert }{\left\lvert B \right\rvert ^{2}} \cdot  \frac{ \left\lvert AA \right\rvert ^{\frac{35}{6} } \left\lvert A + A \right\rvert ^{8}}{\left\lvert A \right\rvert ^{18}}\cdot \left\lvert B \right\rvert ^{\frac{2}{5} }E_{\frac{12}{7} } (B)^{\frac{7}{15} }
\]
or, using \(\left\lvert B \right\rvert \gg \left\lvert A \right\rvert \) and \(\left\lvert B+B \right\rvert \leq \left\lvert A+A \right\rvert \),
\begin{equation} \label{eq:third-term-resolved}
   E_3(S,B)^{\frac{1}{3} }\lesssim \frac{\left\lvert AA \right\rvert ^{\frac{35}{6} }\left\lvert A+A \right\rvert ^{9}}{\left\lvert A \right\rvert ^{\frac{98}{5} }} \cdot E_{\frac{12}{7} } (B)^{\frac{7}{15} }.
\end{equation}

Finally, two applications of Lemma \ref{lem:SP-szt} give
\begin{equation} \label{eq:first-term-resolved}
E_3(B) \leq E_3(A_0)\lesssim \frac{\left\lvert AA \right\rvert ^{\frac{35}{2} } \left\lvert A + A \right\rvert ^{24}}{\left\lvert A \right\rvert ^{52}} 
\end{equation}
and
\begin{equation} \label{eq:second-term-resolved}
E_{\frac{3}{2} } (B,P_{\Delta} )^{\frac{2}{3} } \leq E_{\frac{3}{2} } (A_0,P_{\Delta} )^{\frac{2}{3} }\lesssim \frac{\left\lvert P_{\Delta}  \right\rvert ^{\frac{5}{6}  }  \left\lvert AA \right\rvert ^{\frac{35}{12} } \left\lvert A+A \right\rvert ^{4}}{\left\lvert A \right\rvert ^{\frac{17}{2} }} 
\end{equation}

and substituting (\ref{eq:third-term-resolved}), (\ref{eq:first-term-resolved}), and (\ref{eq:second-term-resolved}) into 
(\ref{eq:SP-from-lemma}) gives
\[\hspace*{-10mm}
    \Delta ^{2} \left\lvert P_{\Delta}  \right\rvert ^{2} \left\lvert B \right\rvert ^{2} \lesssim  \frac{\left\lvert AA \right\rvert ^{\frac{35}{2} } \left\lvert A + A \right\rvert ^{24}}{\left\lvert A \right\rvert ^{52}} \cdot \frac{\left\lvert B+B \right\rvert }{\left\lvert B \right\rvert ^{2}} \cdot \frac{\left\lvert P_{\Delta}  \right\rvert ^{\frac{5}{6}  }  \left\lvert AA \right\rvert ^{\frac{35}{12} } \left\lvert A+A \right\rvert ^{4}}{\left\lvert A \right\rvert ^{\frac{17}{2} }} \cdot \frac{\left\lvert AA \right\rvert ^{\frac{35}{6} }\left\lvert A+A \right\rvert ^{9}}{\left\lvert A \right\rvert ^{\frac{98}{5} }} \cdot E_{\frac{12}{7} } (B)^{\frac{7}{15} }
.\]
Isolating the \(\Delta, \left\lvert P_{\Delta}  \right\rvert \) terms and using \(\left\lvert B \right\rvert \gg \left\lvert A \right\rvert \) and \(\left\lvert B + B \right\rvert \leq \left\lvert A + A  \right\rvert \), gives
\[
    \Delta^{2} \left\lvert P_{\Delta}  \right\rvert ^{\frac{7}{6} } \lesssim  \frac{\left\lvert AA \right\rvert ^{\frac{105}{4}} \left\lvert A+A \right\rvert^{38} }{\left\lvert A \right\rvert^{\frac{841}{10} } } \cdot E_{\frac{12}{7} } (B)^{\frac{7}{15} }
.\]

By the definition of \(\Delta,P_{\Delta} \),
\[
    \Delta ^{2}\left\lvert P_{\Delta}  \right\rvert ^{\frac{7}{6} } = \left( \Delta^{\frac{12}{7} } \left\lvert P_{\Delta}  \right\rvert  \right) ^{\frac{7}{6} } \approx E_{\frac{12}{7} } (B) ^{\frac{7}{6} }
,\]
substituting and simplifying gives
\[
    E_{\frac{12}{7} } (B)\lesssim \frac{\left\lvert AA \right\rvert ^{\frac{75}{2} } \left\lvert A+A \right\rvert ^{\frac{380}{7} }}{\left\lvert A \right\rvert ^{\frac{841}{7} }} 
.\]

Interpolating for \(E(B)\) using \holder's inequality, and using (\ref{eq:first-term-resolved}) gives
\begin{align*}
E(B) & \leq  E_{\frac{12}{7} } (B)^{\frac{7}{9} }  E_3(B)  ^{\frac{2}{9} } \\
& \lesssim \left( \frac{\left\lvert AA \right\rvert ^{\frac{75}{2} } \left\lvert A+A \right\rvert ^{\frac{380}{7} }}{\left\lvert A \right\rvert ^{\frac{841}{7} }}  \right) ^{\frac{7}{9} } \left( \frac{\left\lvert AA \right\rvert ^{\frac{35}{2} } \left\lvert A + A \right\rvert ^{24}}{\left\lvert A \right\rvert ^{52}}  \right) ^{\frac{2}{9} }\\
& = \frac{\left\lvert AA \right\rvert ^{\frac{595}{18}} \left\lvert A+A \right\rvert^{\frac{428}{9}} }{\left\lvert A \right\rvert ^{105}} 
\end{align*}

By Cauchy-Schwarz we have \(E(B) \geq \frac{\left\lvert B \right\rvert ^{4}}{\left\lvert B+B \right\rvert } \gg \frac{\left\lvert A \right\rvert ^{4}}{\left\lvert A+A \right\rvert }  \), and hence
\[
    \frac{\left\lvert A \right\rvert ^{4}}{\left\lvert A + A \right\rvert } \lesssim \frac{\left\lvert AA \right\rvert ^{\frac{595}{18}} \left\lvert A+A \right\rvert^{\frac{428}{9}} }{\left\lvert A \right\rvert ^{105}} \implies \max (\left\lvert A+A \right\rvert , \left\lvert AA \right\rvert )\gtrsim \left\lvert A \right\rvert ^{\frac{4}{3}  + \frac{10}{4407}}
,\]
from which Theorem \ref{thm-SP} follows.

\bigskip

\section{Proof of Theorem \ref{thm-CSUM}}\label{section:CSUM}
Let \(A\) be a sufficiently large finite set (in the sense of Lemma \ref{lem:sum-proj}), which is convex.
Apply Lemma \ref{lem:sum-proj} to the set \(A\) to give \(B \subset A\) with
\(\left\lvert B \right\rvert \geq \frac{1}{2} \cdot \left\lvert A \right\rvert \), and \(\Delta\), \(P_{\Delta} \) for which
\[
    \Delta^{\frac{12}{7} } \left\lvert P_{\Delta}  \right\rvert \approx E_{\frac{12}{7} } \left( R_{A} (B) \right) \approx E_{\frac{12}{7} } (B) 
\]
and
\begin{equation} \label{eq:CSUM-from-lemma}
    \Delta ^{2} \left\lvert P_{\Delta}  \right\rvert ^{2} \left\lvert B \right\rvert ^{2} \ll E_3(B) \cdot \sum _{p \in P_{\Delta} } \delta_{P_{A} (B)} (p)
\end{equation}
To ease notation, call \(P_{A} (B)= S\). This should cause no confusion with how \(S\) is defined in the previous section,
as the \(A_0\) of the previous section has \(\left\lvert A_0 \right\rvert \asymp \left\lvert A \right\rvert \).

We proceed almost identically as in the proof of Theorem \ref{thm-SP} in the previous section,
the only difference being the use of Lemma \ref{lem:convex-szt} in place of Lemma \ref{lem:SP-szt}.

An argument identical to that of the previous section gives
\[
    \sum _{p\in P_{\Delta} } \delta_{S} (p) \lesssim \frac{\left\lvert B+B \right\rvert }{\left\lvert B \right\rvert ^{2}} \cdot E_{\frac{3}{2} } (B,P_{\Delta} )^{\frac{2}{3} } E_{3} (S,B)^{\frac{1}{3} }
,\]
and hence
\begin{equation} \label{eq:convex-to-substitute}
    \Delta^{2}\left\lvert P_{\Delta}  \right\rvert ^{2} \left\lvert B \right\rvert ^{2} \lesssim E_3(B) \cdot \frac{\left\lvert B+B \right\rvert }{\left\lvert B \right\rvert ^{2}}\cdot  E_{\frac{3}{2} } (B,P_{\Delta} )^{\frac{2}{3} } E_{3} (S,  B)^{\frac{1}{3} }.
\end{equation}
Letting \(D_{i} = \left\{ x : \delta_{B} (x) \in [2^{i},2^{i+1}) \right\} \), by the exact same argument as in the previous section,
\[
    \left\lVert \delta_{S,B}  \right\rVert _{3}   \lesssim \frac{\left\lvert B+B \right\rvert }{\left\lvert B \right\rvert ^{2}} \sum _{i \leq \log _{2} \left\lvert B \right\rvert } 2^{i+1} \left\lVert \delta_{B,D_{i} }  \right\rVert _{3} 
.\]
Lemma \ref{lem:convex-szt} gives
\[
    \left\lVert \delta_{B,D_{i} }  \right\rVert _{3} \leq \left\lVert \delta_{A,D_{i} }  \right\rVert _{3} \lesssim \left\lvert A \right\rvert ^{\frac{1}{3} } \left\lvert D_{i}  \right\rvert ^{\frac{2}{3} }
,\]
so
\[
    \left\lVert \delta_{S,B}  \right\rVert _{3} \lesssim \frac{\left\lvert B+B \right\rvert }{\left\lvert B \right\rvert ^{2}} \cdot \left\lvert A \right\rvert ^{\frac{1}{3} }\cdot \sum _{i \leq \log _{2} \left\lvert B \right\rvert } 2^{i+1}\left\lvert D_{i}  \right\rvert ^{\frac{2}{3} }
.\]
Pigeonholing gives a \(j \in \mathbb{N}  \) so that
\[
    \sum _{i \leq \log _{2} \left\lvert B \right\rvert } 2^{i+1} \left\lvert D_{i}  \right\rvert ^{\frac{2}{3} } \lesssim 2^{j} \left\lvert D_{j}  \right\rvert ^{\frac{2}{3} } \leq \left(  \sum _{x \in D_{j} } \delta_{B} (x) ^{\frac{3}{2} } \right) ^{\frac{2}{3} } \leq  E_{\frac{3}{2} } (B) ^{\frac{2}{3} }
.\]

Interpolating using \holder's inequality gives
\[
    E_{\frac{3}{2} } (B)^{\frac{2}{3} } \leq \left\lvert B \right\rvert ^{\frac{2}{5} }  E_{\frac{12}{7} } (B) ^{\frac{7}{15} }
,\]
and hence, using \(\left\lvert B \right\rvert \gg \left\lvert A \right\rvert \) and \(\left\lvert B+B \right\rvert \leq \left\lvert A+A \right\rvert \)
\begin{equation} \label{eq:convex-first-term}
    E_{3} (S,B)^{\frac{1}{3} }\lesssim \frac{\left\lvert A+A \right\rvert }{\left\lvert A \right\rvert ^{\frac{19}{15} }} \cdot E_{\frac{12}{7} } (B)^{\frac{7}{15} }.
\end{equation}
Lemma \ref{lem:convex-szt} gives
\begin{equation} \label{eq:convex-second-term}
    E_3(B) \leq E_3(A) \lesssim \left\lvert A \right\rvert ^{3} 
\end{equation}
and
\begin{equation} \label{eq:convex-third-term}
    E_{\frac{3}{2} }  (B,P_{\Delta} )^{\frac{2}{3} } \leq E_{\frac{3}{2} } (A,P_{\Delta} )^{\frac{2}{3} } \lesssim \left\lvert A \right\rvert ^{\frac{2}{3} }\left\lvert P_{\Delta} \right\rvert ^{\frac{5}{6} }.
\end{equation}  

Substituting (\ref{eq:convex-first-term}), (\ref{eq:convex-second-term}), and (\ref{eq:convex-third-term}) into (\ref{eq:convex-to-substitute})
gives
\[
    \Delta^{2} \left\lvert P_{\Delta}  \right\rvert ^{2} \left\lvert B \right\rvert ^{2} \lesssim \left\lvert A \right\rvert ^{3}  \cdot \frac{\left\lvert A+A \right\rvert }{\left\lvert A \right\rvert ^{2}} \cdot \left\lvert A \right\rvert ^{\frac{2}{3} }\left\lvert P_{\Delta}  \right\rvert ^{\frac{5}{6} }\cdot \frac{\left\lvert A+A \right\rvert }{\left\lvert A \right\rvert^{\frac{19}{15} }}\cdot E_{\frac{12}{7} } (B)^{\frac{7}{15} } 
\]
and hence
\[
    \Delta^{2}\left\lvert P_{\Delta}  \right\rvert ^{\frac{7}{6} } \lesssim \frac{\left\lvert A+A \right\rvert ^{2}}{\left\lvert A \right\rvert ^{\frac{8}{5} }}  \cdot E_{\frac{12}{7} } (B)^{\frac{7}{15} }
.\]

By the definition of \(\Delta, P_{\Delta} \) we have
\[
    \Delta ^{2}\left\lvert P_{\Delta}  \right\rvert ^{\frac{7}{6} } = \left( \Delta^{\frac{12}{7} } \left\lvert P_{\Delta}  \right\rvert  \right) ^{\frac{7}{6} } \approx E_{\frac{12}{7} } (B) ^{\frac{7}{6} }
,\]
upon which substituting and simplifying gives
\[
    E_{\frac{12}{7} } (B)\lesssim \frac{\left\lvert A+A \right\rvert ^{\frac{20}{7} }}{\left\lvert A \right\rvert ^{\frac{16}{7} }} 
.\]

Interpolating for \(E(B)\) using \holder's inequality, and using (\ref{eq:convex-second-term}) gives
\begin{align*}
E(B) & \leq  E_{\frac{12}{7} } (B)^{\frac{7}{9} }  E_3(B)  ^{\frac{2}{9} } \\
& \lesssim \left( \frac{\left\lvert A+A \right\rvert ^{\frac{20}{7} }}{\left\lvert A \right\rvert ^{\frac{16}{7} }}  \right) ^{\frac{7}{9} } \left( \left\lvert A \right\rvert ^{3}  \right) ^{\frac{2}{9} }\\
& = \frac{\left\lvert A+A \right\rvert^{\frac{20}{9} } }{\left\lvert A \right\rvert ^{\frac{10}{9} }} 
\end{align*}

By Cauchy-Schwarz we have \(E(B) \geq \frac{\left\lvert B \right\rvert ^{4}}{\left\lvert B+B \right\rvert } \gg \frac{\left\lvert A \right\rvert ^{4}}{\left\lvert A+A \right\rvert }  \), and hence
\[
    \frac{\left\lvert A \right\rvert ^{4}}{\left\lvert A + A \right\rvert } \lesssim \frac{\left\lvert A+A \right\rvert^{\frac{20}{9} } }{\left\lvert A \right\rvert ^{\frac{10}{9} }}  \implies \left\lvert A+A \right\rvert \gtrsim \left\lvert A \right\rvert ^{\frac{46}{29} }
,\]
from which Theorem \ref{thm-CSUM} follows.

\bigskip

\section{Proof of Theorem \ref{thm-CDIFF}}\label{section:CDIFF}
We use the notation \(D  = A-A\), where \(A\) is clear.
We apply the argument of \cite{bloom} two times. The first time we follow his argument exactly, but to the energy \(E(A)\).
The improvement to the difference set bound is only due to Proposition \ref{prop-ea} below and Lemma \ref{lem:diff-proj}.

\begin{prop}\label{prop-ea}
For \(A\) convex,
\[
    E_{\frac{12}{5} } (A) \lesssim \left\lvert A \right\rvert ^{\frac{38}{15} } \left\lvert D  \right\rvert ^{\frac{4}{45} }
.\]
\end{prop}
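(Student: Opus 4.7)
The plan is a dyadic decomposition of $\delta_A$, together with the convex Szemer\'edi-Trotter bounds packaged in Lemma \ref{lem:convex-szt}.

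First, partition the difference set into level sets $D_k = \{x \in A-A : \delta_A(x) \in [2^k, 2^{k+1})\}$. Dyadic pigeonholing produces a distinguished $k$ with $E_{12/5}(A) \approx 2^{12k/5}|D_k|$; write $T = 2^k$ and $M = |D_k|$. The goal becomes $T^{12/5} M \lesssim |A|^{38/15} |D|^{4/45}$.

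Second, I collect the standard constraints on $(T, M)$: Lemma \ref{lem:convex-szt}(1) gives $T^3 M \lesssim |A|^3$; Lemma \ref{lem:convex-szt}(2) specialised to $s = 2$ gives $T^2 M \lesssim |A|^{5/2}$; and trivially $TM \leq |A|^2$ and $M \leq |D|$. Interpolating via H\"older between $T^3 M \lesssim |A|^3$ and $M \leq |D|$ already yields $T^{12/5}M \lesssim |A|^{12/5}|D|^{1/5}$, which suffices in the degenerate regime $|D| \leq |A|^{6/5}$ but not in the typical regime $|D| \gtrsim |A|^{3/2}$ for a convex set.

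To bridge the remaining gap, one needs a Szemer\'edi-Trotter style bound that genuinely couples $T^\alpha M^\beta$ to $|A|^\gamma |D|^\delta$, rather than only to $|D_k|$. I would run the incidence argument from the proof of Lemma \ref{lem:convex-szt}(1) on the Cartesian product $[|A|] \times D_k$ against translates of the convex curve $f$ parameterising $A$, but with the translate parameter ranging over the full difference set $D$ (weighted appropriately), so that $|D|$ enters through the line count. Combining this extra constraint with the earlier ones by a weighted geometric mean — equivalently, solving a small linear programme in the H\"older exponents — should force $T^{12/5} M \lesssim |A|^{38/15} |D|^{4/45}$, and summing dyadically over $k$ loses at most a logarithm, proving the proposition.

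The main obstacle is producing the auxiliary Szemer\'edi-Trotter estimate. The four constraints read off directly from Lemma \ref{lem:convex-szt} and triviality form a two-parameter family whose H\"older convex hull cannot reach the exponent pair $(38/15, 4/45)$; a check solving the relevant linear system for the exponents $(a, b, c, d)$ in $T^{12/5} M = (T^3 M)^a (T^2 M)^b (TM)^c M^d$ shows the system is inconsistent against the target. So a genuinely new ingredient — one seeing all of $D$, and not just a single dyadic shell — must be supplied, and pinning down the exact incidence configuration that yields the exponents $38/15$ and $4/45$ is where the real work lies.
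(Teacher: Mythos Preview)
You have the right framing and you correctly diagnose that the four constraints you list --- $T^3 M \lesssim |A|^3$, $T^2 M \lesssim |A|^{5/2}$, $TM \leq |A|^2$, $M \leq |D|$ --- are insufficient: their H\"older hull does not reach the target. But you never actually produce the missing constraint, and the paper's argument is not an incidence count on $[|A|]\times D_k$ as you suggest. The genuine new input is
\[
    T^{5}\, M^{19/6} \;\lesssim\; |A|^{6}\, |D|^{2/3},
\]
and it arises from a double Cauchy--Schwarz energy argument, not from a single application of \szemeredi--Trotter.

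Here is the skeleton. Starting from $TM \leq \sum_{x\in X}\delta_A(x) = \sum_{a\in A}\delta_{A,X}(a)$ (with $X$ your level set), Cauchy--Schwarz on $a\in A$ and expanding gives
\[
    \frac{T^2 M^2}{|A|} \leq \sum_{x_1,x_2\in X}\#\{a\in A: a+x_1, a+x_2\in A\}.
\]
The summand vanishes unless $x_1-x_2\in D$, so a second Cauchy--Schwarz over that restricted set yields
\[
    \frac{T^4 M^4}{|A|^2} \leq \Bigl(\sum_{d\in D}\delta_X(d)\Bigr)\cdot E_3(A) \lesssim |A|^3 \sum_{x\in X}\delta_{X,D}(x).
\]
Now reinsert one factor of $T\leq \delta_A(x)$ on $X$, rewrite $\sum_x \delta_A(x)\delta_{X,D}(x) \leq \sum_x \delta_{A,D}(x)\delta_{A,X}(x)$, and apply H\"older with exponents $(3,\tfrac32)$ together with Lemma~\ref{lem:convex-szt} to get $E_3(A,D)^{1/3}E_{3/2}(A,X)^{2/3}\lesssim |A|\,|D|^{2/3}M^{5/6}$. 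This produces exactly $T^5 M^{19/6}\lesssim |A|^6|D|^{2/3}$, and interpolating with $T^3 M\lesssim |A|^3$ via $(T^{12/5}M)^{15/2}=(T^3M)^{13/3}(T^5M^{19/6})$ finishes. Your proposal stops just before this entire mechanism; the incidence setup you sketch does not lead here.
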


We later obtain a bound for \(\left\lvert D \right\rvert \) in terms of \(E (A)\), in which case
the preceding proposition offers an advantage over interpolating with the bound \(E(A) \lesssim \left\lvert A \right\rvert ^{\frac{123}{50} }\),
which is the current best, and is due to Bloom in \cite{bloom}.

\begin{proof}[Proof of Proposition \ref{prop-ea}]
By a dyadic partitioning there is \(\xi \in \mathbb{R} \) such that, defining
\[
    X = \left\{ x : \delta_{A} (x) \in [\xi,2\xi) \right\} 
,\]
we have
\[
    E_{\frac{12}{5} } (A) \approx \xi^{\frac{12}{5} } \left\lvert X \right\rvert 
.\]

By the definition of \(X\),
\[
    \xi \left\lvert X \right\rvert \leq \sum _{x \in X} \delta_{A} (x) = \sum _{a \in A} \delta_{A,X} (a)
,\]
where the last equality follows from the fact that both sums count solutions to
\[
    a_1 - a_2 = x ,~ a_{i} \in A,~ x \in X
.\]

By Cauchy-Schwarz and the definition of \(\delta_{A,X}  \), we have
\[
    \frac{\xi^{2} \left\lvert X \right\rvert ^{2}}{\left\lvert A \right\rvert } \leq \sum _{a \in A} \delta_{A,X} (a)^{2} = \sum _{a \in A} \left[ \sum _{x \in X} 1_{A} (a + x)  \right] ^{2} = \sum _{x_1,x_2 \in X} \sum _{a \in A} 1_{A} (a + x_1) 1_{A} (a + x_2)
.\]

Note that, in the rightmost sum,
\[
    a + x_1 ,~ a + x_2 \in A \implies  x_1 - x_2 \in A-A
,\]
and so
\[
    \sum _{x_1,x_2 \in X} \sum _{a \in A} 1_{A} (a + x_1) 1_{A} (a + x_2)= \sum _{\substack{ x_1, x_2 \in X \\ x_1 - x_2 \in D }} \sum _{a \in A} 1_{A} (a + x_1)1_{A} (a + x_2)
.\]

Applying Cauchy-Schwarz again gives
\begin{equation} \label{eq:diff-big-CS}
    \frac{\xi^{4} \left\lvert X \right\rvert ^{4}}{\left\lvert A \right\rvert ^{2}} \leq \left\lvert \left\{ (x_1,x_2)\in X^{2} : x_1-x_2 \in D \right\}  \right\rvert \cdot \sum _{\substack{ x_1,x_2 \in X \\ x_1 - x_2 \in D }} \left[ \sum _{a \in A} 1_{A} (a + x_1) 1_{A} (a + x_2) \right] ^{2}.
\end{equation}  

By the definition of \(\delta_{X} \), we have
\[
    \left\lvert \left\{ (x_1,x_2)\in X^{2} : x_1-x_2\in D \right\}  \right\rvert = \sum _{d \in D} \delta_{X} (d)
.\]

Expanding and regrouping, we have
\begin{align*}
    & \sum _{\substack{ x_1,x_2 \in X \\ x_1 - x_2 \in D }} \left[ \sum _{a \in A} 1_{A} (a + x_1) 1_{A} (a + x_2) \right] ^{2} \\
    & \leq \sum _{x_1,x_2 \in X} \sum _{a_1,a_2 \in A} 1_{A} (a_1 + x_1)1_{A} (a_1 + x_2) 1_{A} (a_2 + x_1)1_{A} (a_2 + x_2)  \\
    & = \sum _{a_1,a_2 \in A} \left[ \sum _{x \in X} 1_{A} (a_1 + x)1_{A} (a_2 + x) \right] ^{2} \leq \sum _{a_1,a_2 \in A} \delta_{A} (a_1-a_2)^{2}.
\end{align*}
Partitioning the final sum as
\[
    \sum _{a_1,a_2 \in A} \delta_{A} (a_1-a_2)^{2} = \sum _{d \in D} \delta_{A} (d)^{2} \cdot \delta_{A} (d) = E_3(A)
,\]
we see that
\[
    \sum _{\substack{ x_1,x_2 \in X \\ x_1 - x_2 \in D }} \left[ \sum _{a \in A} 1_{A} (a + x_1) 1_{A} (a + x_2) \right] ^{2} \leq E_3(A) \lesssim \left\lvert A \right\rvert ^{3} 
,\]
where the last inequality follows from Lemma \ref{lem:convex-szt}.

Substituting into (\ref{eq:diff-big-CS}) gives
\[
    \frac{\xi^{4}\left\lvert X \right\rvert ^{4}}{\left\lvert A \right\rvert ^{2}} \lesssim \left\lvert A \right\rvert ^{3} \cdot \sum _{d \in D} \delta_{X} (d) = \left\lvert A \right\rvert ^{3} \cdot  \sum _{x \in X} \delta_{X,D} (x)
.\]

Seeing that, for \(x \in X,~ \delta_{A} (x) \geq \xi\), we have
\[
    \frac{\xi^{5}\left\lvert X \right\rvert ^{4}}{\left\lvert A \right\rvert ^{2}} \lesssim \left\lvert A \right\rvert ^{3}  \cdot \sum _{x \in X} \delta_{A} (x)\delta_{X,D} (x)
.\]

See that
\[
    \sum _{x \in X} \delta_{A} (x) \delta_{X,D} (x)   \leq \sum _{x} \delta_{A,D} (x) \delta_{A,X} (x)
,\]
as the LHS counts solutions to
\[
    a_1 - a_2 = x_1 = x_2 - d ,~ x_{i} \in X ,~ a_{i} \in A,~ d \in D
,\]
and
\[
    a_1 - a_2 = x_2 - d \iff a_2 - d = a_1 - x_2
.\]

By \holder's inequality,
\[
    \sum _{x} \delta_{A,D} (x) \delta_{A,X} (x)\leq E_3(A,D)^{\frac{1}{3} }E_{\frac{3}{2} } (A,X)^{\frac{2}{3} }
.\]

Using Lemma \ref{lem:convex-szt} gives
\[
    \frac{\xi^{5} \left\lvert X \right\rvert ^{4}}{\left\lvert A \right\rvert ^{2}} \leq \left\lvert A \right\rvert ^{3} \cdot E_3(A,D) ^{\frac{1}{3} }E_{\frac{3}{2} } (A,X)^{\frac{2}{3} } \lesssim \left\lvert A \right\rvert^{4} \left\lvert D \right\rvert ^{\frac{2}{3} }\left\lvert X \right\rvert ^{\frac{5}{6} }
,\]
or
\begin{equation} \label{eq:diff-to-interpolate}
\xi^{5} \left\lvert X \right\rvert ^{\frac{19}{6} }\lesssim \left\lvert A \right\rvert ^{6}\left\lvert D \right\rvert ^{\frac{2}{3} }
\end{equation}

It follows from Lemma \ref{lem:convex-szt} that
\[
    \xi^{3} \left\lvert X \right\rvert \leq \sum _{x \in X} \delta_{A} (x)^{3} \lesssim \left\lvert A \right\rvert ^{3} 
,\]
and interpolating this with (\ref{eq:diff-to-interpolate}), we have
\[
    \left( \xi^{\frac{12}{5} } \left\lvert X \right\rvert    \right)^{\frac{15}{2} }=  \left( \xi ^{3} \left\lvert X \right\rvert \right)^{\frac{13}{3} } \left( \xi^{5}\left\lvert X \right\rvert ^{\frac{19}{6} } \right)  \lesssim \left\lvert A \right\rvert ^{19}\left\lvert D \right\rvert ^{\frac{2}{3} }
.\]

Using \(\xi^{\frac{12}{5} }\left\lvert X \right\rvert \approx E_{\frac{12}{5} } (A)\) gives
\[
    E_{\frac{12}{5} } (A) \lesssim \left\lvert A \right\rvert ^{\frac{38}{15} } \left\lvert D \right\rvert ^{\frac{4}{45} }
.\]
\end{proof}

From Lemma \ref{lem:diff-proj}, it will suffice to find an upper bound on \(E(A,P)\).
Indeed, Lemma \ref{lem:diff-proj} gives
\begin{equation} \label{eq:from-lemma-diff-proj}
    \left\lvert A \right\rvert ^{6} \ll E_3(A) \cdot \sum _{x \in P} \delta_{P} (x),
\end{equation}
where
\[
    P = \left\{ x \in D : \delta_{A} (x) \geq \frac{1}{11} \cdot \frac{\left\lvert A \right\rvert ^{2}}{\left\lvert D \right\rvert }  \right\} 
.\]
Using Lemma \ref{lem:convex-szt}, \(E_3(A) \lesssim \left\lvert A \right\rvert ^{3} \), and using the definition of \(P\),
\[
    \frac{\left\lvert A \right\rvert ^{2}}{\left\lvert D \right\rvert } \cdot \sum _{x \in P} \delta_{P} (x) \leq \sum _{x \in P} \delta_{A} (x) \delta_{P} (x) \leq E(A,P)
.\]
Substituting both of these into (\ref{eq:from-lemma-diff-proj}), we see that
\begin{equation} \label{eq:diff-proj-energy}
    \frac{\left\lvert A \right\rvert ^{5}}{\left\lvert D \right\rvert } \lesssim E(A,P),
\end{equation}
so it suffices to find an upper bound on \(E(A,P)\).

We proceed now almost identically to \cite{bloom}, the only changes being the use of 
Proposition \ref{prop-ea} and a change in how 
H\"older inequality is used. 

We use
\[
    \# \left\{ a - t = p - d \right\} \leq E_{\frac{3}{2} } (A,T) ^{\frac{2}{3} }E_3(P,D)^{\frac{1}{3} }
\]
as opposed to
\[
    \# \left\{ a - t = p - d \right\} \leq E_{3 } (A,T)^{\frac{1}{3} } E_{\frac{3}{2} } (P,D)^{\frac{2}{3} }
,\]
which takes advantage of (\ref{eq:diff-proj-energy}) being a lower bound on \(E(A,P)\) as opposed to 
\[
    \# \left\{ a_1 - d = a_2 - p : a_1,a_2 \in A ,~ d \in D,~ p \in P \right\}
,\]
as it would be in \cite{bloom}.

We proceed in bounding \(E(A,P)\).
By a dyadic partitioning, there is \(\eta \in \mathbb{R} \) such that, defining
\[
    T = \left\{ x \in A  - P : \delta_{A,P} (x) \in [\eta,2\eta) \right\} 
,\]
we have
\[
    E(A,P)\approx \eta ^{2}\left\lvert T \right\rvert 
.\]
By the definition of \(T\),
\[
    \eta \left\lvert T  \right\rvert \leq \sum _{x \in T } \delta_{A,P} (x) = \sum _{x \in P} \delta_{A,T } (x)  
,\]
where the last equality follows from the fact that both sums count solutions to
\[
     a - p = t ,~ a \in A,~ p \in P ,~ t \in T
.\]

By Cauchy-Schwarz,
\[
    \frac{\eta^{2} \left\lvert T \right\rvert ^{2}}{\left\lvert P \right\rvert  }  \leq \sum _{x \in P} \delta_{A,T } (x)^{2} = \sum _{x \in P} \left[ \sum _{t \in T } 1_{A} (x + t) \right] ^{2} = \sum _{t_1,t_2 \in T} \sum _{x \in P} 1_{A} (x + t_1) 1_{A} (x + t_2)   
.\]

Note that, in the rightmost sum,
\[
    x + t_1 ,~ x + t_2 \in A \implies  t_1 - t_2 \in A-A
,\]
and so
\[
    \sum _{t_1,t_2 \in T} \sum _{x \in P} 1_{A} (x + t_1) 1_{A} (x + t_2)= \sum _{\substack{ t_1, t_2 \in T \\ t_1 - t_2 \in D }} \sum _{x \in P} 1_{A} (x + t_1)1_{A} (x + t_2)
.\]

Hence, by Cauchy-Schwarz,
\begin{equation} \label{eq:large-2}
    \frac{\eta ^{4}\left\lvert T \right\rvert ^{4}}{\left\lvert P \right\rvert ^{2}} \leq \left[ \sum _{x \in D} \delta_{T } (x) \right] \left[ \sum _{t_1,t_2 \in T}  \left( \sum _{x \in P} 1_{A} (x + t_1)1_{A} (x + t_2) \right) ^{2} \right] .
\end{equation}

Expanding and regrouping,
\begin{align*}
& \sum _{t_1,t_2 \in T } \left( \sum _{x \in P} 1_{A} (x + t_1) 1_{A} (x+t_2) \right) ^{2}\\
& = \sum _{t_1,t_2 \in T } \sum _{x_1,x_2 \in P} 1_{A} (x_1 + t_1)1_{A} (x_1 + t_2) 1_{A} (x_2 + t_1) 1_{A} (x_2 + t_2) \\
& = \sum _{x_1,x_2 \in P} \left( \sum _{t \in T } 1_{A} (x_1 + t) 1_{A} (x_2 + t) \right) ^{2} \leq \sum _{x_1,x_2 \in P} \delta_{A} (x_1-x_2)^{2}.
\end{align*}
We can partition the last sum as
\[
    \sum _{x_1,x_2 \in P} \delta_{A} (x_1 - x_2)^{2} = \sum _{x \in P - P} \delta_{A} (x)^{2} \delta_{P}(x)  
.\]

With this, (\ref{eq:large-2}) becomes
\begin{equation} \label{eq:double-bound}
    \frac{\eta^{4} \left\lvert T  \right\rvert ^{4}}{\left\lvert P \right\rvert ^{2}} \lesssim \left( \sum _{x \in D} \delta_{T} (x)  \right)  \left(  \sum _{x} \delta_{A} (x) ^{2}\delta_{P} (x)  \right) 
\end{equation}

Bounding the first term of (\ref{eq:double-bound}), we have
\[
    \sum _{x \in D} \delta_{T} (x) = \sum _{x \in T} \delta_{T,D} (x)
.\]
By the definition of \(T\), we have
\[
    \eta \cdot \sum _{x \in T} \delta_{T,D} (x) \leq \sum _{x \in T} \delta_{T,D} (x) \delta_{A,P} (x) \leq \sum _{x}  \delta_{T,D} (x) \delta_{A,P} (x)
.\]
Using \holder's inequality on the last term above and substituting gives
\begin{equation} \label{eq:first-term}
    \sum _{x \in D} \delta_{T} (x) \leq \frac{1}{\eta} E_{\frac{3}{2} } (A,T)^{\frac{2}{3} } E_{3} (P,D)^{\frac{1}{3} }.
\end{equation}

We proceed with bounding \(E_{3} (P,D)^{\frac{1}{3} }\). Firstly, let \(B_{i} = \left\{ x \in A + D : \sigma_{A,D} (x) \in [2^{i},2^{i+1}) \right\} \),
and let \(\tau_{x} :\mathbb{R}  \to \mathbb{R} \) be the ``translation'' function, so
\[
    \tau_{x} (y) = y + x
.\]
Seeing that, for any \(x \in P-D\),
\[
    \delta_{P,D} (x) \ll \frac{\left\lvert D \right\rvert }{\left\lvert A \right\rvert ^{2}} \cdot r_{A-A - D} (x) 
\]
we have
\[
   E_{3} (P,D)^{\frac{1}{3} } = \left\lVert \delta_{P,D}  \right\rVert _{3}  \ll \frac{\left\lvert D \right\rvert }{\left\lvert A \right\rvert ^{2}} \left\lVert r_{A-A-D} \right\rVert _{3}
.\]

Using the definitions of \(\tau\) and \(r_{A-A-D} \), we rewrite the RHS as
\[
    \frac{\left\lvert D \right\rvert }{\left\lvert A \right\rvert ^{2}}  \left\lVert r_{A-A-D}  \right\rVert _{3}  = \frac{\left\lvert D \right\rvert }{\left\lvert A \right\rvert ^{2}}  \left\lVert \sum _{\substack{ a \in A \\ d \in D }}  1_{A} \circ \tau_{a + d}   \right\rVert _{3} 
.\]
Partitioning the sum over \(A+D\) and \(B_{i} \) respectively gives
\[
    \left\lVert \sum _{\substack{ a \in A \\ d \in D }}  1_{A} \circ \tau_{a + d}   \right\rVert _{3} = \left\lVert \sum _{x \in A + D} 1_{A} \circ \tau_{x} \cdot \sigma_{A,D}  (x) \right\rVert _{3} = \left\lVert \sum _{i \in [\log _{2} \left\lvert A \right\rvert ] } \sum _{x \in B_{i} } 1_{A} \circ \tau_{x} \cdot \sigma_{A,D} (x) \right\rVert _{3}
.\]
By Minkowski's inequality and definition of \(B_{i} \) respectively,
\begin{align*}
    \left\lVert \sum _{i \in [\log _{2} \left\lvert A \right\rvert ] } \sum _{x \in B_{i} } 1_{A} \circ \tau_{x} \cdot \sigma_{A,D} (x) \right\rVert _{3}&  \leq \sum _{i \in [\log _{2} \left\lvert A \right\rvert ]} \left\lVert \sum _{x \in B_{i} } 1_{A} \circ \tau_{x} \cdot \sigma_{A,D} (x) \right\rVert _{3} \\
    & \leq \sum _{i \in [\log _{2} \left\lvert A \right\rvert ]} 2^{i+1} \left\lVert \sum _{x \in B_{i} } 1_{A} \circ \tau_{x}  \right\rVert _{3}.
\end{align*}
By the definition of \(\delta_{A,B_{i} } \),
\[
    \left[ \sum _{x \in B_{i} } 1_{A} \circ \tau_{x} \right] (y) = \delta_{A,B_{i} } (y)
,\]
and hence
\[
    \sum _{i \in [\log _{2} \left\lvert A \right\rvert ]} 2^{i+1} \left\lVert \sum _{x \in B_{i} } 1_{A} \circ \tau_{x}  \right\rVert _{3}  = \sum _{i \in [ \log _{2} \left\lvert A \right\rvert ]} 2^{i+1} \left\lVert \delta_{A,B_{i} }  \right\rVert _{3} 
.\]
Using Lemma \ref{lem:convex-szt} gives
\[
    \sum _{i \in [\log _{2} \left\lvert A \right\rvert ]} 2^{i+1} \left\lVert \delta_{A,B_{i} }  \right\rVert _{3} \lesssim \left\lvert A \right\rvert ^{\frac{1}{3} } \sum _{i \in [\log _{2} \left\lvert A \right\rvert ]} 2^{i+1}  \left\lvert B_{i}  \right\rvert ^{\frac{2}{3} }
.\]
See that there is \(j \in \mathbb{N} \) such that
\[
    \sum _{i \leq \log _{2} \left\lvert A \right\rvert } 2^{i+1} \left\lvert B_{i}  \right\rvert ^{\frac{2}{3} } \leq \log _{2} \left\lvert A \right\rvert 2^{j+1} \left\lvert B_{j}  \right\rvert ^{\frac{2}{3} } \lesssim \left( \sum _{x \in B_{j} } \sigma_{A,D} (x)^{\frac{3}{2} } \right) ^{\frac{2}{3} }
.\]
Using Lemma \ref{lem:convex-szt},
\[
    \left( \sum _{x \in B_{j} } \sigma_{A,D}    (x)^{\frac{3}{2} } \right)^{\frac{2}{3} } \leq  E_{\frac{3}{2} } (A,\left( -D \right) )^{\frac{2}{3} } \lesssim \left\lvert A \right\rvert ^{\frac{2}{3} } \left\lvert D \right\rvert ^{\frac{5}{6} }
.\]

Finally, combining all the work above yields
\[
    E_{3} (P,D)^{\frac{1}{3} } \lesssim  \frac{\left\lvert D \right\rvert }{\left\lvert A \right\rvert ^{2}} \cdot \left\lvert A \right\rvert ^{\frac{1}{3} } \cdot \left\lvert A \right\rvert ^{\frac{2}{3} } \left\lvert D \right\rvert ^{\frac{5}{6} } = \frac{\left\lvert D \right\rvert ^{\frac{11}{6} }}{\left\lvert A \right\rvert }
.\]

Returning to (\ref{eq:first-term}), using Lemma \ref{lem:convex-szt} for \(E_{\frac{3}{2} } (A,T)^{\frac{2}{3} }\), and the inequality above, we obtain
\begin{equation} \label{eq:bloom-arg-part-1}
    \sum _{x \in D} \delta_{T} (x) \leq \frac{1}{\eta} E_{\frac{3}{2} } (A,T)^{\frac{2}{3} } E_{3} (P,D)^{\frac{1}{3} } \lesssim \frac{1}{\eta} \cdot  \left\lvert A \right\rvert ^{\frac{2}{3} }\left\lvert T \right\rvert ^{\frac{5}{6} } \cdot \frac{\left\lvert D \right\rvert ^{\frac{11}{6} }}{\left\lvert A \right\rvert }  = \frac{\left\lvert T \right\rvert ^{\frac{5}{6} }}{\eta}    \cdot \frac{\left\lvert D \right\rvert ^{\frac{11}{6} }}{\left\lvert A \right\rvert ^{\frac{1}{3} }} .
\end{equation}

We proceed with the second term of (\ref{eq:double-bound}), namely
\[
    \sum _{x} \delta_{A} (x)^{2}\delta_{P} (x)
.\]
By a dyadic pigeonholing we see that \( \exists \upsilon \in \mathbb{R}     \) such that, defining
\[
    U = \left\{ x : \delta_{A} (x) \in [\upsilon,2\upsilon) \right\}
,\]
we have
\[
    \sum _{x} \delta_{A} (x) ^{2} \delta_{P} (x) \approx \sum _{x \in U} \delta_{A}(x)^{2} \delta_{P} (x) \asymp \upsilon^{2} \sum _{x \in U} \delta_{P} (x) 
.\]

See that
\[
    \sum _{x \in U} \delta_{P} (x) = \sum _{x \in P} \delta_{P,U} (x)
,\]
as they both count solutions to
\[
    p_1-p_2 = u ,~ p_{i} \in P,~ u \in U
.\]

By the definition of \(P\),
\[
    \frac{\left\lvert A \right\rvert ^{2}}{\left\lvert D \right\rvert } \sum _{x \in P} \delta_{P,U} (x) \ll \sum _{x \in P} \delta_{A} (x) \delta_{P,U} (x)
.\]
See that
\[
    \sum _{x}  \delta_{A} (x) \delta_{P,U} (x) = \sum _{x} \delta_{A,P} (x)\delta_{A,U} (x)
,\]
as they both count solutions to
\[
    a_1 - a_2 = p - u ,~ a_{i} \in A,~ p \in P, u \in U
.\]

\holder's inequality gives
\[
    \sum _{x} \delta_{A,P} (x) \delta_{A,U} (x) \leq E_3(A,P)^{\frac{1}{3} } E_{\frac{3}{2} } (A,U)^{\frac{2}{3} }
,\]
and Lemma \ref{lem:convex-szt} gives
\[
    E_3(A,P)^{\frac{1}{3} } E_{\frac{3}{2} } (A,U)^{\frac{2}{3} } \lesssim \left\lvert A \right\rvert \left\lvert P \right\rvert ^{\frac{2}{3} }\left\lvert U \right\rvert ^{\frac{5}{6} }\leq \left\lvert A \right\rvert \left\lvert D \right\rvert ^{\frac{2}{3} }\left\lvert U \right\rvert ^{\frac{5}{6} }
.\]  

Combining these results gives
\[
    \sum _{x} \delta_{A} (x) ^{2}\delta_{P} (x) \lesssim \upsilon ^{2}\cdot \frac{\left\lvert D \right\rvert }{\left\lvert A \right\rvert ^{2}} \cdot \left\lvert A \right\rvert \left\lvert D \right\rvert ^{\frac{2}{3} }\left\lvert U \right\rvert ^{\frac{5}{6} } = \frac{\left\lvert D \right\rvert ^{\frac{5}{3} }}{\left\lvert A \right\rvert } \cdot \left( \upsilon^{\frac{12}{5} } \left\lvert U \right\rvert  \right) ^{\frac{5}{6} }
.\]
By the definition of \(\upsilon,U\),
\[
    \upsilon^{\frac{12}{5} } \left\lvert U \right\rvert \asymp \sum _{x \in U} \delta_{A} (x)^{\frac{12}{5} } \leq \sum _{x}  \delta_{A} (x) ^{\frac{12}{5} } = E_{\frac{12}{5} } (A)
,\]
so substituting gives
\begin{equation} \label{eq:bloom-arg-part-2}
    \sum _{x} \delta_{A} (x) ^{2}\delta_{P} (x) \lesssim \frac{\left\lvert D \right\rvert ^{\frac{5}{3} }}{\left\lvert A \right\rvert } \cdot E_{\frac{12}{5} } (A)^{\frac{5}{6} }
\end{equation}

Combining (\ref{eq:bloom-arg-part-1}) and (\ref{eq:bloom-arg-part-2}), (\ref{eq:double-bound}) becomes
\[
    \frac{\eta^{4} \left\lvert T  \right\rvert ^{4}}{\left\lvert P \right\rvert ^{2}} \lesssim \left( \frac{\left\lvert T \right\rvert ^{\frac{5}{6} }}{\eta}     \cdot \frac{\left\lvert D \right\rvert ^{\frac{11}{6} }}{\left\lvert A \right\rvert ^{\frac{1}{3} }}  \right)  \left( \frac{\left\lvert D \right\rvert ^{\frac{5}{3} }}{\left\lvert A \right\rvert } E_{\frac{12}{5} } (A)^{\frac{5}{6} } \right) 
,\]
or
\begin{equation} \label{eq:to-interpolate}
    \eta^{5} \left\lvert T \right\rvert ^{\frac{19}{6} } \lesssim \frac{\left\lvert D \right\rvert ^{\frac{11}{2} }}{\left\lvert A \right\rvert ^{\frac{4}{3} }} E_{\frac{12}{5} } (A) ^{\frac{5}{6} } .
\end{equation}

See that, using Lemma \ref{lem:convex-szt},
\[
    \eta ^{3} \left\lvert T \right\rvert \asymp \sum _{x \in T} \delta_{A,P} (x)^{3}  \leq E_3(A,P)\lesssim \left\lvert A \right\rvert \left\lvert P \right\rvert ^{2} \leq \left\lvert A \right\rvert \left\lvert D \right\rvert ^{2}  
,\]
so in particular
\[
    \eta^{3}  \left\lvert T \right\rvert \lesssim  \left\lvert A \right\rvert \left\lvert D \right\rvert ^{2}
.\]

Interpolating with (\ref{eq:to-interpolate}) gives
\[
    \left( \eta ^{3} \left\lvert T \right\rvert      \right) ^{\frac{4}{3} } \eta^{5} \left\lvert T \right\rvert ^{\frac{19}{6} } \lesssim \left( \left\lvert A \right\rvert \left\lvert D \right\rvert ^{2} \right) ^{\frac{4}{3} } \left(   \frac{\left\lvert D \right\rvert ^{\frac{11}{2} }}{\left\lvert A \right\rvert ^{\frac{4}{3} }} E_{\frac{12}{5} } (A) ^{\frac{5}{6} }\right) 
,\]
or by simplifying,
\[
    \eta^{9}\left\lvert T \right\rvert ^{\frac{9}{2} }\lesssim \left\lvert D \right\rvert ^{\frac{49}{6} } E_{\frac{12}{5} } (A)^{\frac{5}{6} }
.\]
By the definition of \(\eta, T\),
\[
    \eta^{9} \left\lvert T \right\rvert ^{\frac{9}{2} } = \left( \eta ^{2}\left\lvert T \right\rvert  \right) ^{\frac{9}{2} } \approx E(A,P)^{\frac{9}{2} } 
\]
and substituting gives
\[
    E(A,P)\lesssim \left\lvert D \right\rvert ^{\frac{49}{27} }  E_{\frac{12}{5} } (A)^{\frac{5}{27} } 
.\]

Applying Proposition \ref{prop-ea} and substituting into (\ref{eq:diff-proj-energy}) gives
\[
    \frac{\left\lvert A \right\rvert ^{5}}{\left\lvert D \right\rvert } \lesssim \left\lvert D \right\rvert ^{\frac{49}{27} } \left( \left\lvert A \right\rvert ^{\frac{38}{15} } \left\lvert D \right\rvert ^{\frac{4}{45} } \right) ^{\frac{5}{27} }
,\]
or
\[
    \left\lvert D \right\rvert \gtrsim \left\lvert A \right\rvert ^{\frac{1101}{688} } = \left\lvert A \right\rvert ^{\frac{8}{5}  + \frac{1}{3440}   }
,\]
from which Theorem \ref{thm-CDIFF} follows.

\bigskip

\bibliographystyle{alpha}
\bibliography{bibli}

\begin{thebibliography}{{Erd}77}

\bibitem[Blo25]{bloom}
Thomas~F. Bloom.
\newblock Control and its applications in additive combinatorics, 2025.

\bibitem[{Erd}77]{erdos1977}
P.~{Erd\H{o}s}.
\newblock Problems in number theory and combinatorics.
\newblock In {\em Proceedings of the Sixth Manitoba Conference on Numerical Mathematics}, volume~18 of {\em Congressus Numerantium}, pages 35--58, Winnipeg, 1977. Utilitas Mathematica.

\bibitem[ES83]{erdos-szemeredi}
P.~Erd{\H{o}}s and E.~Szemer{\'e}di.
\newblock {\em On sums and products of integers}, pages 213--218.
\newblock Birkh{\"a}user Basel, Basel, 1983.

\bibitem[KS16]{konyagin-shkredov}
S.~V. Konyagin and I.~D. Shkredov.
\newblock New results on sums and products in $\mathbb{R}$.
\newblock {\em Proceedings of the Steklov Institute of Mathematics}, 294(1):78--88, 2016.

\bibitem[RS22]{rudnev-stevens}
Misha Rudnev and Sophie Stevens.
\newblock An update on the sum-product problem.
\newblock {\em Mathematical Proceedings of the Cambridge Philosophical Society}, 173(2):411--430, 2022.

\bibitem[SdZ18]{solymosi-szt}
J{\'o}zsef Solymosi and Frank de~Zeeuw.
\newblock {\em Incidence Bounds for Complex Algebraic Curves on Cartesian Products}, pages 385--405.
\newblock Springer Berlin Heidelberg, Berlin, Heidelberg, 2018.

\bibitem[Sha19]{shakan}
George Shakan.
\newblock On higher energy decompositions and the sum-product phenomenon.
\newblock {\em Mathematical Proceedings of the Cambridge Philosophical Society}, 167(3):599--617, 2019.

\bibitem[Sol09]{Solymosi}
József Solymosi.
\newblock Bounding multiplicative energy by the sumset.
\newblock {\em Advances in Mathematics}, 222:402--408, 2009.

\bibitem[SS11]{schoen-shkredov}
Tomasz Schoen and Ilya~D. Shkredov.
\newblock On sumsets of convex sets, 2011.

\bibitem[ST83]{SzT-original}
E.~Szemer{\'e}di and W.~T. Trotter.
\newblock Extremal problems in discrete geometry.
\newblock {\em Combinatorica}, 3(3):381--392, 1983.

\end{thebibliography}

\end{document}